\newcommand{\R}{\mathbb{R}}				
\newcommand{\BB}{\mathbb{B}}				
\newcommand{\Sphere}{\mathbb{S}}                        
\newcommand{\EE}{\mathbb{E}}			
\newcommand{\V}{\textrm{Var}}			
\newcommand{\C}{\textrm{Cov}}			
\newcommand{\dd}{\mathrm{d}}			
\newcommand{\ang}{d}     	                
\newcommand{\Levy}{\textrm{L\'{e}vy }}	        
\newcommand{\one}{\ensuremath{\mathbbm{1}}}     
\newcommand{\abs}[1]{\left\vert#1\right\vert}
\def\ee{{\rm{e}}}
\def\dd{{\,\rm{d}}}
\newtheorem{algorithm}{Algorithm}
\shorttitle{Gaussian random particles}
\begin{document}

\title{Gaussian random particles with flexible Hausdorff dimension}

\begin{singlespace}

\authorone[Varde College]{Linda V.~Hansen} 
\addressone{Varde College, Frisvadvej 72, 6800 Varde, Denmark. E-mail: \url{LV@varde-gym.dk}}

\authortwo[Norwegian Computing Center]{Thordis L.~Thorarinsdottir}
\addresstwo{Norwegian Computing Center, P.O. Box 114 Blindern, 0314 Oslo, Norway. E-mail: \url{thordis@nr.no}}

\authorthree[Heidelberg Institute for Theoretical Studies]{Evgeni Ovcharov}
\addressthree{Heidelberg Institute for Theoretical Studies, Schloss-Wolfsbrunnenweg 35, 
  69118 Heidelberg, Germany. E-mail: \url{evgeni.ovcharov@h-its.org}}

\authorfour[Heidelberg Institute for Theoretical Studies and Karlsruhe Institute of Technology]{Tilmann Gneiting}
\addressfour{Heidelberg Institute for Theoretical Studies, Schloss-Wolfsbrunnenweg 35, 
  69118 Heidelberg, Germany. E-mail: \url{tilmann.gneiting@h-its.org}}

\authorfive[Penn State University]{Donald Richards}
\addressfive{Department of Statistics, Penn State University, 326
  Thomas Building, University Park, PA 16802, USA. E-mail: \url{richards@stat.psu.edu}}

\end{singlespace}


\begin{abstract} 

Gaussian particles provide a flexible framework for modelling and
simulating three-dimensional star-shaped random sets.  In our
framework, the radial function of the particle arises from a kernel
smoothing, and is associated with an isotropic random field on the
sphere.  If the kernel is a von Mises--Fisher density, or uniform on a
spherical cap, the correlation function of the associated random field
admits a closed form expression.  The Haus\-dorff dimension of the
surface of the Gaussian particle reflects the decay of the correlation
function at the origin, as quantified by the fractal index.  Under
power kernels we obtain particles with boundaries of any Hausdorff
dimension between 2 and 3.

\end{abstract}


\vspace{3mm}
\keywords{celestial body; correlation function;
fractal dimension; \Levy basis; random field on a sphere; simulation
of star-shaped random sets} 

\vspace{3mm}
\ams{60D05}{60G60, 37F35}


\section{Introduction}  \label{sec:introduction} 

Mathematical models for three-dimensional particles have received
great interest in astronomy, botany, geology, material science, and
zoology, among many other disciplines.  While some particles such as
crystals have a rigid shape, many real-world objects are star-shaped,
highly structured, and stochastically varying \citep{Wicksell:1925,
StoyanStoyan1992}.  As a result, flexible yet parsimonious models for
star-shaped random sets have been in high demand.
\citet{GrenanderMiller1994} proposed a model for two-dimensional
featureless objects with no obvious landmarks, which are represented
by a deformed polygon along with a Gaussian shape model.  This was
investigated further by \citet{KentDrydenAnderson2000} and
\citet{HobolthKentDryden2002}, and a non-Gaussian extension was
suggested by \citet{HobolthPedersenJensen2003}.  \citet{Miller1994}
proposed an isotropic deformation model that relies on spherical
harmonics and was studied by \citet{Hobolth2003}, where it was applied
to monitor tumour growth.  A related Gaussian random shape model was
studied by \citet{Muinonen&1996} and used by \citet{Munoz&2007} to
represent Saharan desert dust particles.

In this paper we propose a flexible framework for modelling
three-dimensional star-shaped particles, where the radial function is
a random field on the sphere that arises through a kernel smoothing.
Specifically, let $Y\subset \R^3$ be a
three-dimensional compact set, which is star-shaped with respect to an
interior point $o$.  Then there is a one-to-one correspondence between
the set $Y$ and its radial function $X = \{X(u) \, : \, u \in
\Sphere^2 \}$, where
\[
X(u) = \max \{ r \geq 0 : o + ru \in Y \}, \qquad u \in \Sphere^2, 
\]
with $\Sphere^2 = \{ x \in \R^3 : \| x \| = 1 \}$ denoting the unit
sphere in $\R^3$.  We model $X$ as a real-valued random field on
$\Sphere^2$ via a kernel smoothing of a Gaussian measure, in that
\begin{equation}  \label{eq:model} 
X(u) = \int_{\Sphere^2} K(v,u) \, L(\dd v), \qquad u \in \Sphere^2, 
\end{equation}
where $K : \Sphere^2 \times \Sphere^2 \to \bar{\R}$ is a suitable
kernel function, and $L$ is a Gaussian measure on the Borel subsets of
$\Sphere^2$.  That is, 
$L(A) \sim \mathcal{N} \! \left( \, \mu \, \lambda(A), \sigma^2 \lambda(A) \right)$
with parameters $\mu \in \R$ and $\sigma^2 > 0$, 
where $\lambda(A)$ denotes the surface measure of a Borel set $A
\subseteq \Sphere^2$, with $\lambda(\Sphere^2) = 4\pi$.

If $X$ were a nonnegative process, the random particle could be described as the set
\[
Y = \bigcup_{u \in \Sphere^2} \left\{ o + ru : 0 \leq r \leq X(u) \right\}	
\subset \R^3, 
\]
so that the particle contains the centre $o$, which without loss of
generality can be assumed to be the origin, and the distance in
direction $u$ from $o$ to the particle boundary is given by $X(u)$.  A
potentially modified particle $Y_c$ arises in the case of a general,
not necessarily nonnegative process, where we replace $X(u)$ by
$X_c(u) = \max(c, X(u))$ for some $c > 0$.  We call $Y$ or $Y_c$ a
Gaussian particle, with realisations being illustrated in Figure
\ref{fig:Gaussian}.  The Gaussian particle framework is a special 
case of the linear spatio-temporal \Levy model proposed by
\citet{Jonsdottir2008} in the context of tumour growth.
Alternatively, it can be seen as a generalisation and a
three-dimensional extension of the model proposed by
\citet{HobolthPedersenJensen2003}, while also being a generalisation
of the Gaussian random shape models of \citet{Miller1994} and
\citet{Muinonen&1996}.

\begin{figure}[t]  
\centering
\includegraphics[height=4cm]{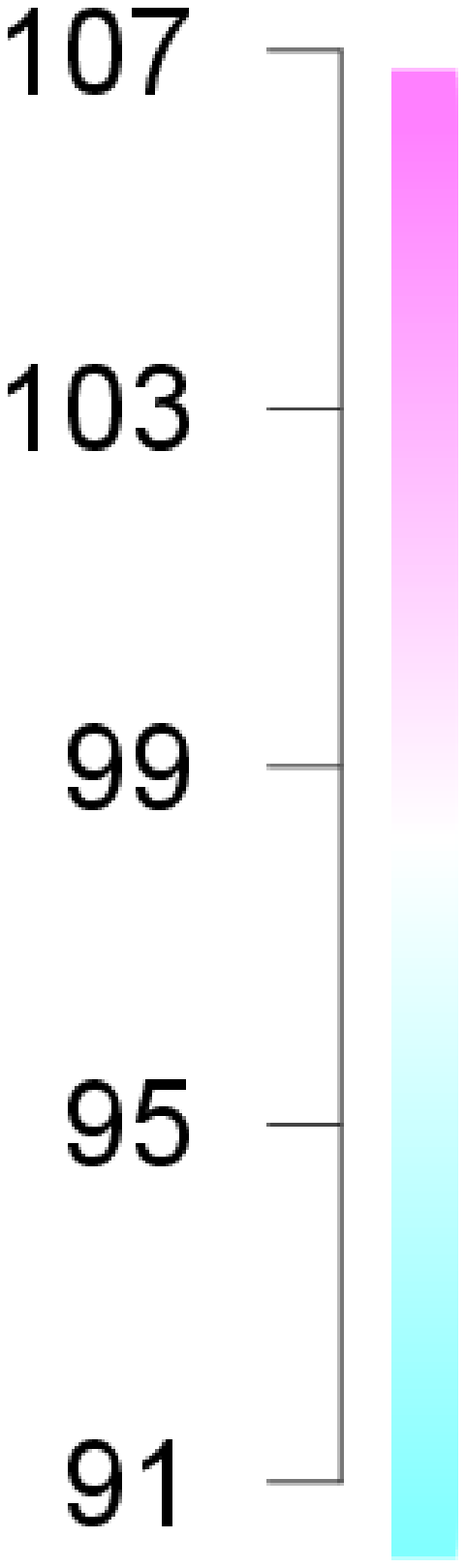}
\hspace{0.7cm}
\includegraphics[height=4cm]{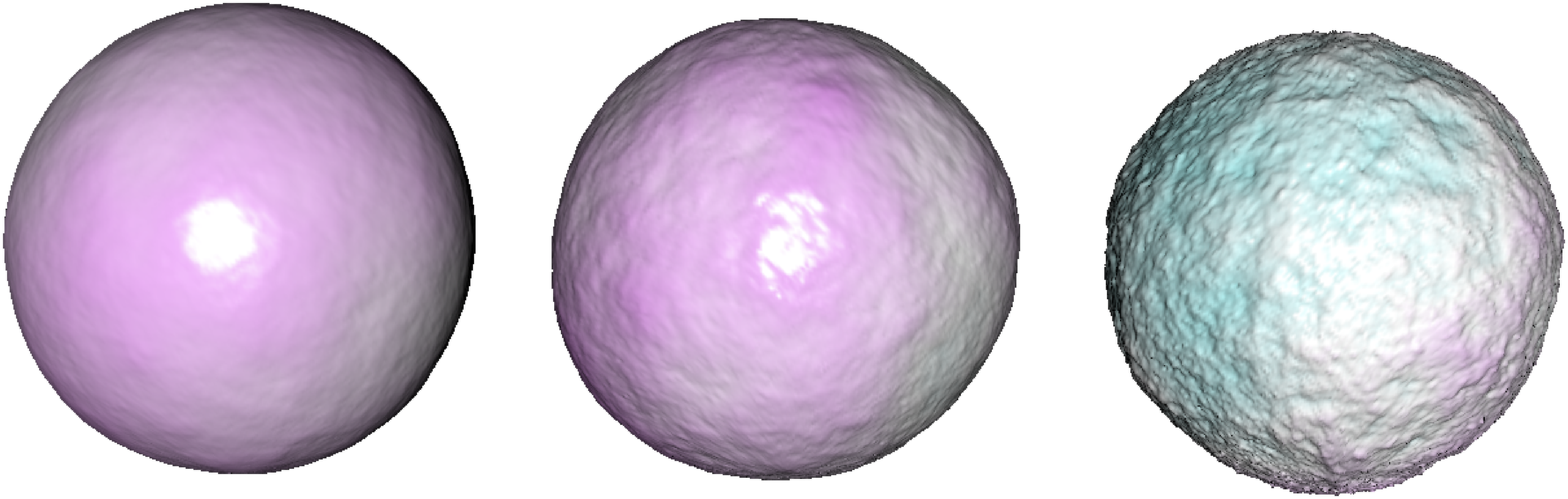}
\caption{Gaussian particles with mean $\mu_X = 100$ and variance
  $\sigma^2_X = 10$, using the power kernel
  \eqref{eq:power} with $q = 0.05$ (left), $q = 0.25$ (middle) and $q
  = 0.5$ (right).  The Hausdorff dimension of the particle
  surface equals $2 + q$.
\label{fig:Gaussian}}
\bigskip
\end{figure}

The realisations in Figure \ref{fig:Gaussian} demonstrate that the
boundary or surface of a Gaussian particle allows for regular as well
as irregular behaviour.  The roughness or smoothness of the surface in
the limit as the observational scale becomes infinitesimally fine can
be quantified by the Hausdorff dimension, which for a surface in
$\R^3$ varies between 2 and 3, with the lower limit corresponding to a
smooth, differentiable surface, and the upper limit corresponding to
an excessively rough, space-filling surface \citep{Falconer1990}.  The
Hausdorff dimension of the surface of an isotropic Gaussian particle
is determined solely by the behaviour of the correlation function of
the associated random field on the sphere.  We investigate the
properties of Gaussian particles under parametric families of
isotropic kernel functions, including power kernels, and kernels that
are proportional to von Mises--Fisher densities
\citep{Fisher_etal_1987}, or uniform on spherical caps.  Under power
kernels we obtain particles with boundaries of any Hausdorff dimension
between 2 and 3.  Von Mises--Fisher and uniform kernels generate
Gaussian particles with boundaries of Hausdorff dimension 2 and 2.5,
respectively.

The remainder of the paper is organised as follows.  Section
\ref{sec:Levy} recalls basic properties of the radial function in the
Gaussian particle model \eqref{eq:model}.  In
Section~\ref{sec:Hausdorff} we show how to derive the Hausdorff
dimension of an isotropic Gaussian particle from the infinitesimal
behaviour of the correlation function of the underlying random field
at the origin.  Section \ref{sec:kernels} introduces the
aforementioned families of isotropic kernels and discusses the
properties of the associated correlation functions and Gaussian
particles, with some technical arguments referred to an appendix.
Section \ref{sec:simulation} presents a simulation algorithm and
simulation examples, including a case study on celestial bodies and a
discussion of planar particles.  The paper ends with a discussion in
Section \ref{sec:discussion}.


\section{Preliminaries}  \label{sec:Levy}

The properties of the random function \eqref{eq:model} that
characterises a Gaussian particle process depend on the kernel
function $K$.  We assume that $K$ is isotropic, in that $K(v,u) =
k(\ang(v,u))$ depends on the points $v, u \in \Sphere ^2$ through
their great circle distance $\ang(v,u) \in [0,\pi]$ only.  As 
$\ang(v,u) = \arccos(u \cdot v)$, this is equivalent to assuming that 
the kernel depends on the inner product $u \cdot v$ only.  Results of
\citet{Jonsdottir2008} in concert with the rotation invariance property 
following from an isotropic kernel imply that the mean function 
$\EE(X(u))$ and the variance function $\V(X(u))$ are constant, that is,
\[
\mu_X =\EE(X(u)) = \mu \, c_1 
\qquad \textrm{and} \qquad
\sigma^2_X = \V(X(u)) = \sigma^2 \, c_2
\]
for $u \in \Sphere^2$, where we assume that 
\[
c_n = \int_{\Sphere^2} k(\ang(v,u))^n \, \dd v
\]
is finite for $n = 1, 2$.  

Note that $X$ is a stochastic process on the sphere \citep{Jones1963}, whose 
covariance function is given by
\[
\C(X(u_1),X(u_2)) = \sigma^2 \int_{\Sphere^2} k(\ang(v,u_1)) \, k(\ang(v,u_2)) \, \dd v, 
\qquad u_1, u_2 \in \Sphere^2, 
\]
Under an isotropic kernel, the random field $X$ is isotropic as well, and it
is readily seen that $\textrm{Corr}(X(u_1),X(u_2)) =
C(\ang(u_1,u_2))$, where
\begin{equation}  \label{eq:C}
C(\theta) = \frac{2}{c_2} \int_0^\pi \int_0^\pi k(\eta) \, 
k(\arccos(\sin\theta \sin\eta \cos\phi + \cos\theta \cos\eta)) 
\, \dd \phi \, \sin\eta \, \dd\eta, \qquad 0 \leq \theta \leq \pi, 
\end{equation}
is the correlation function of the random field $X$.  As recently
shown by \cite{Ziegel2014}, any continuous isotropic correlation
function on a sphere admits a representation of this form.


\section{Hausdorff dimension}  \label{sec:Hausdorff}

The Hausdorff dimension of a set $Z \subset \R^d$ is defined as
follows \citep{Hausdorff1918}.  For $\epsilon > 0$, an
$\epsilon$-cover of $Z$ is a countable collection $\{B_i : i =
1,2,\ldots\}$ of balls $B_i \subset \R^d$ of diameter $|B_i|$ less
than or equal to $\epsilon$ that covers $Z$.  With
\[
H^{\delta}(Z) = \lim_{\epsilon \rightarrow 0} \; 
\inf \left\{ \: {\textstyle \sum} |B_i|^\delta : \{B_i : i = 1,2,\ldots\} 
\textup{ is an $\epsilon$-cover of $Z$} \right\}
\]
denoting the $\delta$-dimensional Hausdorff measure of $Z$, there
exists a unique nonnegative number $\delta_0$ such that $H^{\delta}(Z)
= \infty$ if $\delta < \delta_0$ and $H^{\delta}(Z) = 0$ if $\delta >
\delta_0$.  This number $\delta_0$ is the Hausdorff dimension of the
set $Z$.  Note that we have defined the Hausdorff measure using
coverings with balls.  This approach is consistent with the treatments
given by \citet{Adler1981} and \citet{HallRoy1994} and simplifies the
presentation.  

As $X$ is a kernel smoothing of a Gaussian measure, $X$ has Gaussian
finite dimensional distributions and thus is a Gaussian process.
While there is a wealth of results on the Hausdorff dimension of the
graphs of stationary Gaussian random fields on Euclidean spaces, which
is determined by the infinitesimal behaviour of the correlation
function at the origin, as formalised by the fractal index
\citep{HallRoy1994, Adler1981}, we are unaware of any extant results
for the graphs of random fields on spheres, or for the surfaces of
star-shaped random particles.

We now state and prove such a result.  Toward this end, we say that an
isotropic random field $X$ on the sphere with correlation function $C
: [0,\pi] \to \R$ has fractal index $\alpha > 0$ if there exists a constant
$b > 0$ such that
\begin{equation}  \label{eq:fractal.index} 
\lim_{\theta \downarrow 0} \frac{C(0) - C(\theta)}{\theta^\alpha} =  b.
\end{equation}
The fractal index exists for essentially all correlation functions of
practical interest, and it is always true that $\alpha \in (0,2]$. 
To see this, suppose that $C : [0,\pi] \to \R$ is an isotropic
correlation function on the two-dimensional sphere.  Clearly, $C$ also
is an isotropic correlation function on the circle, and its even,
$2\pi$ periodic continuation to $\R$ is a stationary correlation
function on the real line.  Therefore, the corresponding restriction
on Euclidean spaces \citep[p.~200]{Adler1981} applies, in that $\alpha
\in (0,2]$.

The following theorem relates the Hausdorff dimension of the graph of
an isotropic Gaussian random field $X$ on the sphere $\Sphere^2$ to
its fractal index.  The proof employs stereographic projections that
allow us to draw on classical results in the Euclidean case.

\begin{theorem} \label{thm:Hausdorff}
Let\/ $X$ be an isotropic Gaussian random field on\/ $\Sphere^2$ with
fractal index\/ $\alpha \in (0,2]$.  Consider the random surface
\[ 
Z_c = \left\{ (u, X_c(u)) : u \in \Sphere^2 \right\} \! ,  
\]
where\/ $X_c(u) = \max(c,X(u))$ with\/ $c > 0$.  Then with probability
one either of the following alternatives holds:
\begin{enumerate}
\item[(a)] If\/ $\max_{u \in \Sphere^2} X(u) \leq c$, the realisation
  of\/ $Z_c$ is the sphere with radius\/ $c$ and so its Hausdorff
  dimension is 2.
\item[(b)] If\/ $\max_{u \in \Sphere^2} X(u) > c$, the realisation
  of\/ $Z_c$ has Hausdorff dimension\/ $3 - \frac{\alpha}{2}$.
\end{enumerate}
\end{theorem}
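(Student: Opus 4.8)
The plan is to dispose of the flat alternative (a) directly and to reduce (b) to the Euclidean theory of Gaussian graphs by means of a stereographic projection. Alternative (a) is immediate: on the event $\{\max_u X(u)\le c\}$ we have $X_c\equiv c$, so $Z_c$ is the sphere of radius $c$ and its Hausdorff dimension is $2$. For (b) I would first extract the analytic content of the fractal index. Since $\EE[(X(u)-X(v))^2]=2\sigma_X^2\,(C(0)-C(\ang(u,v)))\sim 2\sigma_X^2 b\,\ang(u,v)^\alpha$ by \eqref{eq:fractal.index}, and $X$ is Gaussian (so its increments have all moments), the Kolmogorov--Chentsov theorem yields a version of $X$ that is almost surely Hölder continuous of every order below $\alpha/2$. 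In particular $X$ is a.s.\ continuous, $\max_u X(u)$ is a.s.\ finite and attained, and $U:=\{u:X(u)>c\}$ is a.s.\ open, with $X_c=X$ on $U$.

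On $\Sphere^2\setminus U$ the surface $Z_c$ lies on the sphere of radius $c$ and so contributes dimension at most $2$, which is $\le 3-\tfrac\alpha2$ because $\alpha\le2$. As Hausdorff dimension is stable under countable unions, alternative (b) reduces to exhibiting, on the event $\{\max_u X(u)>c\}$, a subset of $Z_c$ of dimension $3-\tfrac\alpha2$ while bounding all of $Z_c$ above by the same value. The core is a local Euclidean computation on a fixed closed spherical disk $W$ avoiding the poles. Writing $\psi:\Sphere^2\setminus\{N\}\to\R^2$ for stereographic projection from the north pole $N$ and $\tilde X:=X\circ\psi^{-1}$ on the compact set $\psi(W)$, conformality of $\psi$ gives $\ang(\psi^{-1}x,\psi^{-1}y)\sim\rho(y)\,|x-y|$ with $\rho$ continuous and strictly positive, so for $W$ small enough the increment variance obeys the two-sided bound $\EE[(\tilde X(x)-\tilde X(y))^2]\asymp|x-y|^\alpha$.

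Thus $\tilde X$ is a Gaussian field on a planar domain with index-$\alpha$ increments, and the classical results \citep{Adler1981, HallRoy1994} give $\dim_{\mathrm H}\{(x,\tilde X(x)):x\in\psi(W)\}=2+1-\tfrac\alpha2=3-\tfrac\alpha2$ almost surely. Because $\psi\times\mathrm{id}$ is bi-Lipschitz on $W$, and the radial map $(u,r)\mapsto ru$ is bi-Lipschitz on $\Sphere^2\times[c,\max_u X(u)]$, and Hausdorff dimension is a bi-Lipschitz invariant, the associated piece of $Z_c$ has dimension $3-\tfrac\alpha2$ as well. To absorb the randomness of $U$ I would fix once and for all a countable family $\{W_j\}$ of such disks with rational data whose interiors form a basis for $\Sphere^2$ minus the poles; the preceding step then holds almost surely for every $W_j$ simultaneously, and on $\{\max_u X(u)>c\}$ the nonempty open set $U$ must contain some $W_j$, on which $X_c=X$. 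This gives the lower bound $\dim_{\mathrm H}Z_c\ge3-\tfrac\alpha2$.

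The matching upper bound is softer: since $t\mapsto\max(c,t)$ is $1$-Lipschitz, $X_c$ inherits the Hölder exponents of $X$, so a finite stereographic cover of $\Sphere^2$ combined with the standard modulus-of-continuity covering estimate yields $\dim_{\mathrm H}Z_c\le3-\tfrac\alpha2$, completing (b). The step I expect to be most delicate is the faithful transfer of the fractal index through $\psi$: one must check that conformality upgrades the one-sided spherical asymptotic \eqref{eq:fractal.index} into genuine two-sided Euclidean bounds $\EE[(\tilde X(x)-\tilde X(y))^2]\asymp|x-y|^\alpha$ on a small patch, so that both the elementary upper half and the Gaussian energy-method lower half of the planar result apply. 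Once that is secured, the random-domain issue is merely bookkeeping.
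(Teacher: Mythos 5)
Your proposal is correct and follows essentially the same route as the paper's proof: project a small spherical patch stereographically, upgrade the fractal-index asymptotic to two-sided bounds $\EE[(\tilde X(x)-\tilde X(y))^2]\asymp \|x-y\|^\alpha$ via bi-Lipschitz equivalence of chord, arc and planar distances, invoke the Euclidean graph-dimension result valid under exactly such two-sided bounds (the paper cites Theorem 5.1 of Xue and Xiao 2011 for this, which is the form you correctly identify as needed, since the projected field is not stationary), and pull the dimension back by bi-Lipschitz invariance. Your two points of extra care --- the countable family of rational disks to handle the randomness of the set $U=\{u: X(u)>c\}$, and the explicit H\"older/covering upper bound for the graph of $X_c$ over all of $\Sphere^2$ --- are refinements of, not departures from, the paper's argument, which instead conditions ``without loss of generality'' on the choice of cap and compresses the upper bound into a terse appeal to countable stability.
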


\begin{proof}
The claim in alternative (a) is trivial.  To prove the statement in
alternative (b), we assume without loss of generality that $X(u_0) >
c$, where $u_0 = (0,0,1)$.  The sample paths of $X$ are continuous
almost surely according to \citet[Theorem 7.2]{Gangolli}.  Thus, there
exists an $\epsilon \in (0,\frac{1}{2})$ such that $X(u) > c$ for $u$
in the spherical cap $\Sphere^2_\epsilon = \{ u \in \Sphere^2 :
\ang(u,u_0) \leq \epsilon \}$ of radius $\epsilon$ centred at $u_0$.
Let $\Pi : \Sphere^2_\epsilon \to \BB_\epsilon$ denote a stereographic
projection that maps $(0,0,1)$ to $(0,0)$, where $\BB_\epsilon = \{ x =
(x_1,x_2) \in \R^2 : x_1^2 + x_2^2 \leq \epsilon^2 \}$.  A
stereographic projection is a local diffeomorphism, $\Pi$ thus is
differentiable and has a differentiable inverse $\Pi^{-1}$, which is
locally bi-Lipschitz \citep{docarmo}.  We may therefore assume that
$\epsilon$ is small enough so that for all $x, x' \in \BB_\epsilon$
there exists a constant $A \geq 1$ with
\begin{equation}  \label{eq:bi.Lipschitz}
\frac{1}{A} \| x - x' \| \leq \| \Pi^{-1}(x) - \Pi^{-1}(x') \| \leq A \| x - x' \|, 
\end{equation} 
where $\| \cdot \|$ denotes the Euclidean norm on $\R ^2$ or $\R^3$,
respectively.  Without loss of generality, we may in the following 
consider conditional probabilities which depend on the choice of $u_0$ and 
$\epsilon$.  Let the Gaussian random field $W$ on $\BB_\epsilon
\subset \R^2$ be given by $W(x) = X(\Pi^{-1}(x))$.  From
\citet[Theorem 5.1]{XueXiao2011}, see also Chapter 8 in
\citet{Adler1981}, the graph $\textrm{Gr} \, W = \{ (x, W(x)) : x \in
\BB_\epsilon \}$ has Hausdorff dimension $3 - \frac{\alpha}{2}$ almost
surely if there exists a constant $M_0 > 1$ such that
\begin{equation}  \label{eq:XueXiao}
\frac{1}{M_0} \sum_{j=1}^2 |x_j -x'_j|^\alpha 
\leq \EE (W(x)-W(x'))^2 
\leq M_0 \sum_{j=1}^2 |x_j -x'_j|^\alpha
\end{equation}
for all $x, x' \in \BB_\epsilon$.  Letting $\vartheta(x,x') =
\ang(\Pi^{-1}(x), \Pi^{-1}(x'))$, we have 
\begin{equation}  \label{eq:W}
\EE (W(x)-W(x'))^2 = 2 \sigma_X^2 \left[ C(0) - C(\vartheta(x,x')) \right] \! , 
\end{equation}
where $C : [0,\pi] \to \R$ is the correlation function of the
isotropic random field $X$.  As chord length and great circle distance
are bi-Lipschitz equivalent metrics, there exists a constant $B > 1$
such that
\begin{equation}  \label{eq:chord.arc}
\frac{1}{B} \, \|\Pi^{-1}(x) - \Pi^{-1}(x')\| 
\leq \vartheta(x,x')
\leq B \, \|\Pi^{-1}(x) - \Pi^{-1}(x')\|.
\end{equation}
As the random field $X$ is of fractal index $\alpha$, there exists a
constant $M_1 > 0$ such that
\[
\sum_{j=1}^2 | x_j - x'_j |^{\alpha} 
\leq 2^{1 - \tfrac{\alpha}{2}} \|x-x'\|^\alpha 
\leq 2^{1 - \tfrac{\alpha}{2}} A^\alpha B^\alpha \, \vartheta(x,x')^\alpha 
\leq M_1 \left[ C(0) - C(\vartheta(x,x')) \right]
\]
for $x, x' \in \BB_\epsilon$ and $\epsilon > 0$ sufficiently small,
where the first estimate is justified by Jensen's inequality and the
second by \eqref{eq:bi.Lipschitz} and \eqref{eq:chord.arc}.
Similarly, there exists a constant $M_2 > 0$ such that
\[
M_2 \left[ C(0) - C(\vartheta(x,x')) \right] \leq \sum_{j=1}^2 | x_j - x'_j |^{\alpha}
\]
for all $x, x' \in \BB_\epsilon$ and $\epsilon > 0$ sufficiently small.
In view of equation \eqref{eq:W}, this proves the existence of a
constant $M_0 > 1$ such that \eqref{eq:XueXiao} holds, given that
$\epsilon > 0$ is sufficiently small.

Now, consider the mapping $\zeta$ from $\BB_\epsilon \times \R$ to
$\Sphere^2_\epsilon \times \R$ defined by $\zeta(x,r) = (\Pi^{-1}(x),
r)$, so that $\zeta(\textrm{Gr} \, W) = \{ (u, X(u)) : u \in
\Sphere^2_\epsilon \}$.  The identity
\[
\| \zeta(x,r) - \zeta (x',r' )\|^2 = \| \Pi^{-1}(x) - \Pi^{-1}(x') \|^2 + |r - r'|^2.
\]
along with \eqref{eq:bi.Lipschitz} implies $\zeta$ to be bi-Lipschitz.
Therefore by Proposition 3.3 of \citet{Falconer1990}, the partial
surface $\{ (u, X(u)) : u \in \Sphere^2_\epsilon \}$ has Hausdorff
dimension $3 - \frac{\alpha}{2}$ almost surely.  Invoking the
countable stability property \citep[p.~49]{Falconer1990}, we see that the
full surface $Z_c = \{ (u, X_c(u)) : u \in \Sphere^2_\epsilon \}$ also
has Hausdorff dimension $3 - \frac{\alpha}{2}$ almost surely.
\end{proof}


\section{Isotropic kernels}  \label{sec:kernels}

It is often desirable that the surface of the particle process
possesses the same Hausdorff dimension as that of the real-world
particles to be emulated \citep{Mandelbrot1983, OrfordWhalley1983,
Turcotte1987}.  With this in mind, we introduce and study three
one-parameter families of isotropic kernels for the Gaussian particle
process \eqref{eq:model}.  The families yield interesting correlation
structures, and we study the asymptotic behaviour at zero, which
determines the Hausdorff dimension of the Gaussian particle surface.


\subsection{Von Mises--Fisher kernel}

Here, we consider $k$ to be the unnormalised von Mises--Fisher
density, 
\[
k(\theta) = \ee^{a \cos \theta}, \qquad 0 \leq \theta \leq \pi, 
\]
with parameter $a > 0$.  The von Mises--Fisher density with parameter
$a > 0$ is widely used in the analysis of spherical data
\citep{Fisher_etal_1987}, and in this context $a$ is called the
precision.  Straightforward calculations show that 
\[
C(\theta) = \frac{2}{\sinh(2a)} \, 
\frac{\sinh \! \left( a \sqrt{2(1+\cos\theta)} \right)}{\sqrt{2(1+\cos\theta)}},
\qquad 0 \leq \theta \leq \pi, 
\]
from which it is readily seen that the fractal index is $\alpha = 2$.
The surfaces of the corresponding Gaussian particles are smooth
and have Hausdorff dimension 2, independently of the value of the
parameter $a \in \R$. 


\subsection{Uniform kernel}

We now let the kernel $k$ be uniform, in that
\[
k(\theta) = \one(\theta \leq r), \qquad 0 \leq \theta \leq \pi, 
\]
with cut-off parameter $r \in (0,\frac{\pi}{2}]$.  As shown in the
appendix of \cite{Tovchigrechko}, the associated correlation function is
\begin{eqnarray*} 
\lefteqn{\hspace{-5mm} C(\theta) = \frac{1}{\pi \, (1 - \cos r)} 
\left( \pi - \arccos \left( \frac{\cos \theta -\cos^2 r}{1 - \cos^2 r} \right) \right.} \\ 
&& \hspace{10mm} \left. \vphantom{\csc^2}
- \, 2 \cos r \arccos \left( \cot r \, \frac{1- \cos \theta}{\sin \theta}  \right) \right)
\one(\theta \leq 2r), \qquad 0 \leq \theta \leq \pi. 
\end{eqnarray*} 
In particular, if $r = \frac{\pi}{2}$ then $C(\theta) = 1 -
\frac{\theta}{\pi}$ decays linearly throughout.  Taylor expansions
imply that the correlation function has fractal index $\alpha = 1$ for
all $r \in (0,\frac{\pi}{2})$, so that the corresponding Gaussian
particles have non-smooth boundaries of Hausdorff dimension
$\frac{5}{2}$.


\subsection{Power kernel}\label{sec: power kernel}

Our third example is the power kernel where the isotropic kernel 
$k$ is defined as 
\begin{equation}  \label{eq:power}
k(\theta) = \left( \frac{\theta}{\pi} \right)^{-q} - 1, \qquad 0 < \theta \leq \pi, 
\end{equation}
with power parameter $q \in (0,1)$.  The associated
correlation function \eqref{eq:C} takes the form
\begin{equation}  \label{eq:power.correlation}
C(\theta) = \frac{2}{c_2} \int_0^\pi \! \left( \pi^q \lambda^{-q} - 1 \right) \sin\lambda 
\int_{A(\lambda)} \! \left( \pi^q a(\theta,\lambda,\phi)^{-q} -1 \right) \dd\phi \, \dd\lambda,
\end{equation}
where 
\[
t(\theta,\lambda,\phi) = \sin \theta \sin \lambda \cos \phi + \cos\theta \cos\lambda,
\qquad 
a(\theta,\lambda,\phi) = \arccos t(\theta,\lambda,\phi), 
\]
and 
\[
A(\lambda) = \{ \phi \in [0,\pi] : 0 < a(\theta,\lambda,\phi) \leq \pi \}.  
\]
The normalising constant $c_2$ is here given by 
\[
c_2 = 
2\pi \int_0^\pi (\pi^q \lambda^{-q} -1)^2 \sin \lambda {\;\rm d}\lambda 
= \frac{(q)_3}{6} \sum_{j=0}^\infty \frac{(-1)^j \pi^{2j+3}}{(2j+1)!} \frac{1}{(1-q+j)_3}, 
\]
where $(a)_3 \equiv a(a+1)(a+2)$.  This expression for $c_2$ is
obtained by expanding $\sin \lambda$ in a Maclaurin series and then
integrating the series termwise.

Our next result shows that the correlation function
\eqref{eq:power.correlation} has fractal index $\alpha = 2 - 2q$, so
that the corresponding Gaussian particles have surfaces with Hausdorff
dimension $2 + q$, as illustrated in Figure \ref{fig:Gaussian}.

\begin{theorem} \label{thm:power}
If\/ $0 < q < 1$, the correlation function \eqref{eq:power.correlation} satisfies 
\begin{equation}  \label{eq:as} 
\lim_{\theta \downarrow 0} \frac{C(0) - C(\theta)}{\theta^{2-2q}} =  b_q, 
\end{equation}
where 
\begin{align}  
b_q & = \frac{2 \pi^{2q}}{c_2} 
\int_0^{\infty} x^{1-q} \int_0^\pi \left( x^{-q} - 
\left( x^2 + 1 - 2x\cos\phi \right)^{-q/2} \right) \dd\phi \, \dd x \label{eq:bq} \\
& = \frac{\pi^{2q+1}}{c_2 (1-q)^2} \,  
\frac{\Gamma(1 - \tfrac{1}{2} q)^2 \, \Gamma(q)}{\Gamma(\tfrac{1}{2} q)^2 \, \Gamma(1-q)}.
\rule{0mm}{8mm} \label{eq:explicit}
\end{align}
In particular, the correlation function has fractal index\/ $\alpha = 2 - 2q$. 
\end{theorem}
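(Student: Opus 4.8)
The plan is to isolate the non-smooth $\theta^{2-2q}$ contribution to $C(0)-C(\theta)$ from a neighbourhood of the pole, where $k(\theta)=\pi^q\theta^{-q}-1$ is singular, and to show that everything else is $O(\theta^2)$. First I would rewrite the increment. Abbreviating $a=a(\theta,\lambda,\phi)$ and noting that $C(0)=1$ (since $c_2=2\pi\int_0^\pi k(\lambda)^2\sin\lambda\,\dd\lambda$ and $a(0,\lambda,\phi)=\lambda$), the additive $-1$ in the kernel cancels in the difference, $k(\lambda)-k(a)=\pi^q(\lambda^{-q}-a^{-q})$, so that
\[
C(0)-C(\theta)=\frac{2\pi^q}{c_2}\int_0^\pi\bigl(\pi^q\lambda^{-q}-1\bigr)\sin\lambda\int_0^\pi\bigl(\lambda^{-q}-a(\theta,\lambda,\phi)^{-q}\bigr)\,\dd\phi\,\dd\lambda.
\]

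Second, I would fix a small $\delta>0$ and split the $\lambda$-integral at $\delta$. On $[\delta,\pi]$ the map $\phi\mapsto a(\theta,\lambda,\phi)$ is smooth and bounded away from $0$; the spherical law of cosines gives $a=\lambda-\theta\cos\phi+O(\theta^2)$, hence $\lambda^{-q}-a^{-q}=-q\lambda^{-q-1}\theta\cos\phi+O(\theta^2)$ uniformly, and since $\int_0^\pi\cos\phi\,\dd\phi=0$ the inner integral is $O(\theta^2)$, making the far part $O(\theta^2)=o(\theta^{2-2q})$. On $[0,\delta]$ I would substitute $\lambda=\theta x$. From $\cos a=1-\tfrac12\theta^2(1+x^2-2x\cos\phi)+O(\theta^4)$ one gets $a=\theta\sqrt{1+x^2-2x\cos\phi}\,(1+O(\theta^2))$ and $\sin(\theta x)=\theta x(1+O(\theta^2))$. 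The $-1$ summand contributes only at order $\theta^{2-q}$, while the $\pi^q\lambda^{-q}$ summand, after collecting the powers $\theta^{-q}\cdot\theta\cdot\theta^{-q}\cdot\theta=\theta^{2-2q}$, produces a rescaled integrand converging pointwise to $\tfrac{2\pi^{2q}}{c_2}\,x^{1-q}\bigl(x^{-q}-(x^2+1-2x\cos\phi)^{-q/2}\bigr)$, which identifies the candidate constant \eqref{eq:bq}.

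The main obstacle is the limit passage in this rescaled integral. For $q\le\tfrac12$ the pointwise limit is not absolutely integrable at $x=\infty$, so convergence rests on the same azimuthal cancellation seen in the far part. I would therefore integrate in $\phi$ first and dominate the resulting function of $x$ by an integrable majorant uniform in $\theta$: the expansion $(x^2+1-2x\cos\phi)^{-q/2}=x^{-q}(1+q\cos\phi/x+O(x^{-2}))$ yields $O(x^{-1-2q})$ decay of the $\phi$-averaged integrand, while a uniform comparison of great-circle and planar distance on the cap controls the integrable singularity at $x=1,\phi=0$ down to the diagonal; the $\theta$-dependent cut-off $x\le\delta/\theta$ is absorbed by letting $\delta\downarrow0$ after $\theta\downarrow0$. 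Dominated convergence then delivers \eqref{eq:as} with $b_q$ as in \eqref{eq:bq}, and with \eqref{eq:fractal.index} this gives fractal index $\alpha=2-2q$.

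Finally, to reach the closed form \eqref{eq:explicit} I would read the double integral in \eqref{eq:bq} as a planar integral, $I:=\tfrac12\int_{\R^2}\abs{y}^{-q}(\abs{y}^{-q}-\abs{y-e_1}^{-q})\,\dd y$ with $e_1$ a unit vector, and evaluate it through the Riesz beta integral
\[
\int_{\R^2}\abs{y}^{-\alpha}\abs{y-e_1}^{-\beta}\,\dd y=\pi\,\frac{\Gamma(1-\tfrac\alpha2)\,\Gamma(1-\tfrac\beta2)\,\Gamma(\tfrac{\alpha+\beta-2}2)}{\Gamma(\tfrac\alpha2)\,\Gamma(\tfrac\beta2)\,\Gamma(2-\tfrac{\alpha+\beta}2)},
\]
continued analytically, together with the regularisation $\int_{\R^2}\abs{y}^{-\gamma}\,\dd y=0$ in that sense (as $\int_0^\infty r^{s-1}\,\dd r=0$ by analytic continuation). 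With $\alpha=\beta=q$ this gives $2I=-\pi\,\Gamma(1-\tfrac q2)^2\Gamma(q-1)/[\Gamma(\tfrac q2)^2\Gamma(2-q)]$, and the simplifications $\Gamma(q-1)=\Gamma(q)/(q-1)$ and $\Gamma(2-q)=(1-q)\Gamma(1-q)$ reduce $b_q=\tfrac{2\pi^{2q}}{c_2}I$ to the stated expression \eqref{eq:explicit}; an alternative that avoids the regularisation uses $\int_0^\pi(x^2+1-2x\cos\phi)^{-q/2}\dd\phi=\pi\,{}_2F_1(\tfrac q2,\tfrac q2;1;x^2)$ for $x<1$ (and its reciprocal-argument form for $x>1$) followed by termwise integration of the hypergeometric series.
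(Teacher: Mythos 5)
Your treatment of the asymptotics \eqref{eq:as} is essentially the paper's own proof: cancel the additive constants in the kernel, rescale $\lambda=\theta x$, integrate in $\phi$ first so that $\int_0^\pi\cos\phi\,\dd\phi=0$ removes the term that destroys integrability at $x=\infty$ when $q\leq\tfrac12$, and finish with dominated convergence. Two of your steps need repair, though. First, the expansion $a(\theta,\lambda,\phi)=\lambda-\theta\cos\phi+O(\theta^2)$ is \emph{not} uniform on $[\delta,\pi]$: at $\lambda=\pi$ one has $a=\pi-\theta$ exactly, so the first-order coefficient there is $-1$ rather than $-\cos\phi$, the inner integral is of order $\theta$ rather than $\theta^2$, and the second $\theta$-derivative of $a$ blows up like $1/\sin(\lambda-\theta)$ as $\lambda\uparrow\pi$. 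Your conclusion (far part $=O(\theta^2)$) survives, because $\sin\lambda\lesssim\sin(\lambda-\theta)$ for $\lambda\leq\pi-\theta$ and the strip $(\pi-\theta,\pi)$ contributes $O(\theta^3)$, but this is exactly why the paper restricts its Taylor expansion (Lemma \ref{le:4}) to $y\leq\pi-\theta$, carries the $1/\sin(y-\theta)$ factor in the remainder, and disposes of $\lambda\in(\pi-\theta,\pi)$ separately by the mean value theorem. Second, dominated convergence requires the azimuthal cancellation \emph{at positive} $\theta$: expanding the limit function $\left(x^2+1-2x\cos\phi\right)^{-q/2}$, as you propose, gives decay of the limit but not a $\theta$-uniform integrable majorant. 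You must expand $\theta^q a(\theta,\theta x,\phi)^{-q}$ itself to first order in $\theta$, with a Lagrange remainder controlled up to $x=(\pi-\theta)/\theta$; this is the technical core of the paper (Lemmas \ref{le:4} and \ref{le:5}) and is what your sketch leaves open.

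Your evaluation of the constant is genuinely different from the paper's. The paper represents $x^{-q}$ by Gamma integrals, reduces the $\phi$-integral to ${}_0F_1$, passes to ${}_1F_1$ via a Laplace transform, applies Kummer's transformation, and finishes with a Mellin transform. You instead read \eqref{eq:bq} as the planar integral $\tfrac12\int_{\R^2}\abs{y}^{-q}\left(\abs{y}^{-q}-\abs{y-e_1}^{-q}\right)\dd y$ and invoke the Riesz composition integral; your gamma-factor bookkeeping is correct and does reproduce \eqref{eq:explicit}, and this is arguably the more illuminating derivation. But as written it is a formal computation, not a proof: for $0<q<1$ both $\int_{\R^2}\abs{y}^{-2q}\,\dd y$ and $\int_{\R^2}\abs{y}^{-q}\abs{y-e_1}^{-q}\,\dd y$ diverge at infinity --- and for $q\leq\tfrac12$ even the difference is not absolutely integrable, so \eqref{eq:bq} only exists as the iterated integral with $\phi$ taken first --- hence splitting into a difference of two regularised values, with $\int_{\R^2}\abs{y}^{-\gamma}\dd y=0$ ``by continuation'', is precisely the step that needs justification. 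It can be supplied, e.g.\ by a cutoff at radius $R$, tracking the divergent term $2\pi R^{2-2q}/(2-2q)$ and identifying the finite part of $\int_{\abs{y}<R}\abs{y}^{-q}\abs{y-e_1}^{-q}\dd y$ with the continued Riesz value for $q\in(\tfrac12,1)$, followed by analytic continuation in $q$ of both sides of \eqref{eq:bq} and \eqref{eq:explicit} to reach $q\leq\tfrac12$; alternatively, your second route via $\int_0^\pi\left(x^2+1-2x\cos\phi\right)^{-q/2}\dd\phi=\pi\,{}_2F_1\!\left(\tfrac q2,\tfrac q2;1;x^2\right)$ and termwise integration avoids regularisation entirely, at the price of evaluating the resulting ${}_3F_2$-type sums --- work comparable to the paper's hypergeometric manipulations.
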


We defer the proof of this result to the Appendix.  The power kernel
\eqref{eq:power} has a negative exponent and thus is unbounded, which
may lead to unbounded particle realisations.  While values of $q < 0$
are feasible, they are of less interest, as the associated correlation
functions have fractal index $\alpha = 2$, thereby generating smooth
particles only.


\section{Examples}  \label{sec:simulation}

Here, we demonstrate the flexibility of the Gaussian particle
framework in simulation examples.  First, we introduce a simulation
algorithm.  Then we simulate celestial bodies whose surface properties
resemble those of the Earth, the Moon, Mars, and Venus, as reported in
the planetary physics literature.  Furthermore, we study and simulate
the planar particles that arise from the two-dimensional version of
the particle model.


\subsection{Simulation algorithm}  \label{sec:algorithm}

To sample from the Gaussian particle model \eqref{eq:model}, we utilise
the property that the underlying measure is independently scattered.
Specifically, for every sequence $(A_n)$ of disjoint Borel subsets of
$\Sphere^2$, the random variables $L(A_n)$, $n = 1, 2, \dots$ are
independent and $L(\cup A_n) = \sum L(A_n)$ almost surely.
Let $(A_n)_{n=1}^N$ denote an equal area partition of $\Sphere^2$, so
that $\lambda(A_n) = 4\pi/N$ for $n = 1, \dots , N$.  The random
field $X$ in \eqref{eq:model} can then be decomposed into a sum of
integrals over the disjoint sets $A_n$, in that
\[
X(u) = \sum_{n=1}^N \int_{A_n} k(v,u) \, L(\dd v), \qquad u \in \Sphere^2.
\]
For $n=1, \dots , N$ fix any point $v_n \in A_n$.  We can then
approximate the random field $X$ by setting 
\[
x(u) = \sum_{n=1}^N k(v_n,u) \, L( A_n ), \qquad u \in \Sphere^2.
\]
Let us denote the multivariate normal joint distribution of $L(A_1),
\ldots, L(A_N)$ by $F_N$.  To simulate a realisation $y$ of the
particle $Y_c$, we use the following algorithm.

\begin{algorithm} \mbox{}
\begin{enumerate}[noitemsep]
\item Set\/ $M = M_1 M_2$, where\/ $M_1$ and\/ $M_2$ are positive
  integers, and construct a grid\/ $u_1, \ldots, u_M$ on\/
  $\Sphere^2$.  Using spherical coordinates, let\/ $u_m = (\theta_m,
  \phi_m)$ and put\/ $\theta_m = i\pi/M_1$ and\/ $\phi_m = 2\pi
  j/M_2$, where\/ $m = i M_2 + j$ for\/ $i = 0, 1, \dots$, $M_1 - 1$
  and\/ $j = 1, \dots, M_2$.
\item Apply the method of \citet{Leopardi2006} to construct an equal
  area partition\/ $A_1, \ldots, A_N$ of\/ $\Sphere^2$.
\item For\/ $n = 1, \ldots, N$, let\/ $v_n$ have spherical coordinates
  equal to the mid range of the latitudes and longitudes within\/
  $A_n$, respectively.
\item For\/ $n = 1, \ldots, N$, generate independent random variables\/ $L_n$
  from\/ $F_N$.
\item For\/ $m = 1, \ldots, M$, set\/ $x(u_m) = \max(c, \sum_{n=1}^N k(v_n,u_m) \, L_n)$.
\item Set\/ $y$ to be the triangulation of\/ $\{ (u_m,x(u_m)) : m = 1, \ldots, M \}$. 
\end{enumerate}
\end{algorithm}

The equal area partitioning algorithm of \citet{Leopardi2006} is a
recursive zonal partitioning algorithm.  That is, after appropriate
polar cap areas have been removed, the sphere is divided into
longitudinal zones, each of which is subsequently divided by latitude.
By construction, the equal areal partition cells are continuity sets
with respect to the intensity of the Gaussian measure, and in all our examples 
the kernel $k$ is continuous almost everywhere. 

This simulation procedure has been implemented in {\sc R} \citep{R},
and code is available from the authors upon request.  It can be
considered an analogue of the moving average method \citep{Oliver95,
CP02, HansenThorarinsdottir2013} for simulating Gaussian random fields
on Euclidean spaces.  In principle, $M$ and $N$ can take any positive
integer values.  However, the usual trade-off applies, in that the
quality of the realisations increases with $M_1$, $M_2$, and $N$, at
the expense of prolonged run times.  For the realisations in
Figures~\ref{fig:Gaussian}--\ref{fig:gamma}, we used $M_1 = 200$ and
$M_2 = 400$, or $M = 8 \times 10^4$, and $N = 10^5$.


\begin{table}[pt]  
\caption{Mean radius $r_0$, difference $d_+$ between maximal and mean
  radius, and difference $d_-$ between minimal and mean radius, for
  Venus, Dry Earth, the Moon, and Mars, in kilometres.  \label{tab:bodies}}
\begin{center}
\begin{tabular}{lrrrr}
\toprule
Body & Venus & Dry Earth & Moon & Mars \\
\midrule
$r_0$ & 6051.8 & 6367.2 & 1737.1 & 3389.5 \\
$d_+$ & 11.0 & 8.8 & 5.5 & 21.2 \\
$d_-$ & $-3.0$ & $-11.0$ & $-12.0$ & $-8.2$ \\
\bottomrule
\end{tabular}
\end{center}
\end{table}

\begin{figure}[pt]  
\begin{center}
\includegraphics[trim = 0 0 0 0, width=0.65\textwidth]{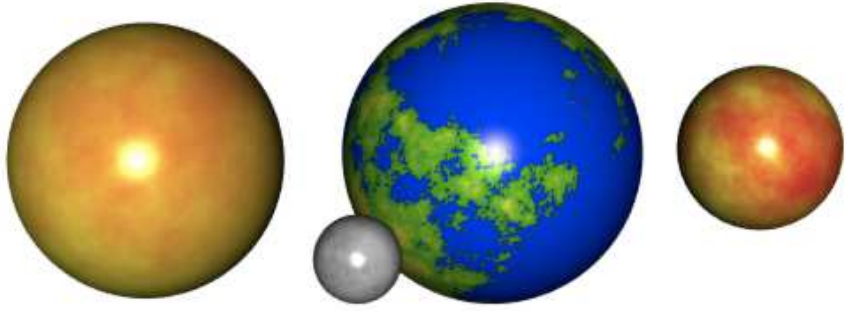}
\caption{Simulations of Venus, the Earth, the Moon, and Mars in true relative size.  
  \label{fig:bodies}}
\end{center}
\end{figure}

\begin{figure}[pt]  
\begin{center}
\includegraphics[trim = 0 0 0 0, width=0.7\textwidth]{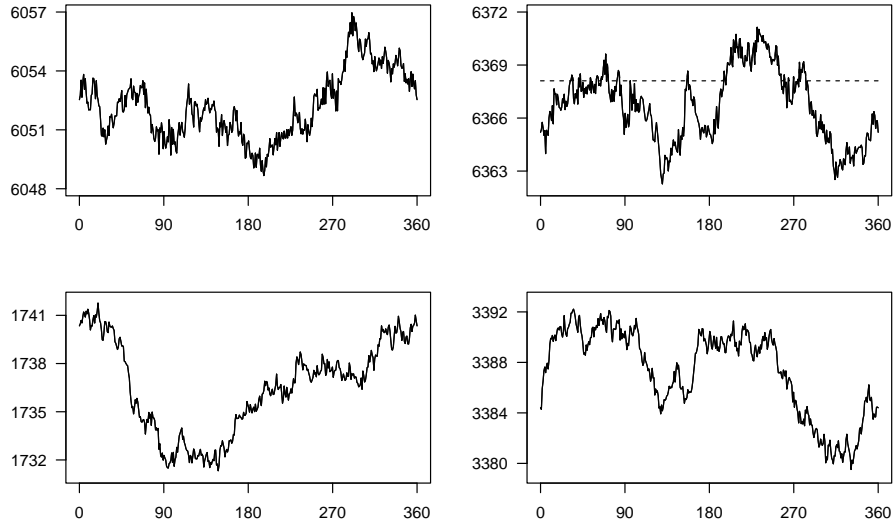}
\caption{Radial function along the equator for the simulated bodies in
  Figure \ref{fig:bodies} in kilometres.  Clockwise from upper left:
  Venus, the Earth with ocean level indicated by a dashed horizontal line, Mars, and the Moon.  
  \label{fig:equatorial}}
\end{center}
\bigskip
\end{figure}

\subsection{Celestial bodies}

The geophysical literature has sought to characterise the surface
roughness of the Earth and other celestial bodies in the solar system
via the Hausdorff dimension of their topography \citep{Mandelbrot1983,
  Kucinskas&1992}, with \citet{Turcotte1987} arguing that the
dimension is universal and equals about 2.5.  Here, we provide
simulated version of the planets Earth, Venus, and Mars, and of the
Moon, under the Gaussian particle model \eqref{eq:model}, with $k$
being the power kernel \eqref{eq:power}.  We set $q = \tfrac{1}{2}$,
which gives the desired fractal dimension for a Gaussian particle
surface, and choose the parameters $\mu = r_0/c_1$ and $\sigma^2 =
(d_+ - d_-)^2/c_2$ of the Gaussian measure such that they correspond
to reality.  For this we use the information listed in Table
\ref{tab:bodies}, which was obtained from \citet{Price1988},
\citet{JonesStofan2008}, and online sources.  The values concerning
the Earth describe `Dry Earth'; to simulate `Wet Earth' we make a
cut-off that corresponds to the Gaussian particle $Y_c$ with
truncation parameter $c = 6371$ kilometres.  In principle, we also
need to make a cut-off at $c=0$ for `Dry Earth', but this is
unnecessary in essentially all realizations.  In our simulation
algorithm, we use $M_1 = 200$, $M_2 = 400$, and $N = 10^6$ to obtain
the celestial bodies in Figure \ref{fig:bodies}.  The corresponding
radial functions along the equator are shown in Figure
\ref{fig:equatorial}.


\subsection{Planar particles}  \label{sec:planar}

\begin{table}[t]  
\caption{Analytic form, parameter range, constants and associated
  fractal index for parametric families of isotropic kernels $k :
  [0,2\pi) \to \bar{\R}$ on the circle
    $\Sphere^1$.  \label{tab:planar}}
\begin{center}
\begin{tabular}{lccc}
\toprule
Kernel & von Mises--Fisher & Uniform &  Power \\
\midrule
Analytic Form & $k(\theta) = \ee^{a \cos\theta}$ & $k(\theta) = \one(\theta \leq r)$ & 
$k(\theta) = \left( \frac{\theta}{\pi} \right)^{-q} - 1 $ \rule{0mm}{8mm} \\
Parameter     & $a > 0$ & $r \in (0,\frac{\pi}{2}]$ & $q \in (-\frac{1}{2},0) \cup (0,\frac{1}{2})$ \rule{0mm}{8mm} \\
$c_1$         & $2\pi I_0(a)$ & $2r$ & $\displaystyle 2\pi \frac{q}{1-q}$ \rule{0mm}{8mm} \\
$c_2$         & $2\pi I_0(2a)$ & $2r$ & $\displaystyle 4\pi \frac{q^2}{1-3q+2q^2}$ \rule{0mm}{8mm} \\
Fractal Index & $2$ & $1$ & $1-2q$ \rule{0mm}{8mm} \\
\bottomrule
\end{tabular}
\end{center}
\end{table}  

\begin{table}[pt]  
\caption{Values of the parameter $a$ for the von Mises--Fisher kernel,
  the parameter $r$ for the uniform kernel, and the parameter $q$ for
  the power kernel used to generate the planar particles in Figure
  \ref{fig:2d}.  \label{tab:2d}}
\begin{center}
\begin{tabular}{lrrr}
\toprule
Row & $a$ & $r$ & $q$ \\
\midrule
1 &   3 & 1.5 & 0.05 \\
2 &   30 & 1.0 & 0.25 \\
3 &  300 & 0.5 & 0.45 \\
\bottomrule
\end{tabular}
\end{center}
\end{table}

\begin{figure}[pt]  
\begin{center}
\includegraphics[width = \textwidth]{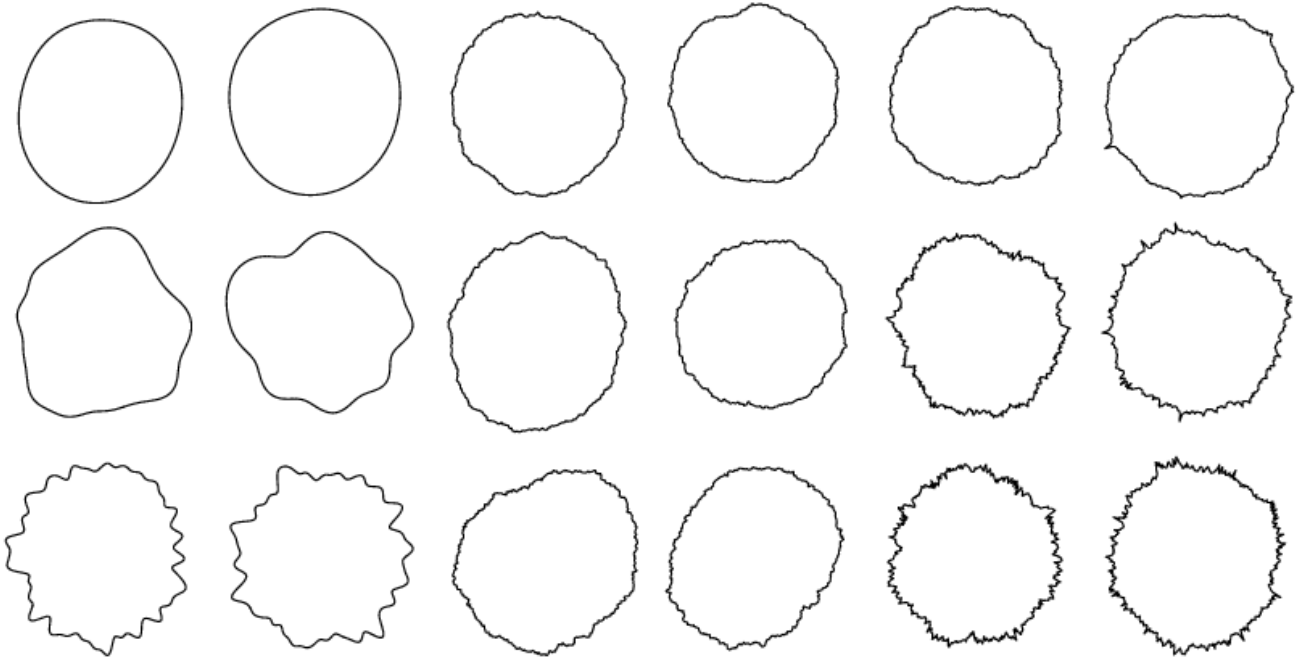}
\caption{Planar particles with mean $\mu_X = 25$ and variance
  $\sigma^2_X = 10$.  Columns 1 and 2 show particles generated using a
  von Mises--Fisher kernel, columns 3 and 4 particles using
  a uniform kernel, and columns 5 and 6 particles using a
  power kernel, with parameters varying by row as described in Table
  \ref{tab:2d}.  The particles in columns 1, 3, and 5 are generated
  under a Gaussian measure, those in columns 2, 4, and 6 under a gamma
  measure.  \label{fig:2d}}
\end{center}
\end{figure}

We now reduce the dimension and consider the planar Gaussian random particle
\[
Y_c = \bigcup_{u \in \Sphere^1} \left\{ o + ru : 0 \leq r \leq \max(X(u),c) \right\}	
\subset \R^2. 
\]
Here $c > 0$, $o \in \R^2$ is an arbitrary centre, and the radial
function $X(u)$ is modelled as
\[
X(u) = \int_{\Sphere^1} K(v,u) \, L(\dd v), \qquad u \in \Sphere^1, 
\]
with a suitable kernel function $K : \Sphere^1 \times \Sphere^1 \to
\bar{\R}$ and a Gaussian measure $L$ on the Borel subsets of the unit
sphere $\Sphere^1 = \{ x \in \R^2 : \| x \| = 1\}$.  We further
construct planar gamma particles where $L$ is a gamma measure on the
Borel subsets of the unit sphere, $L(A) \sim \mathrm{Gamma} \left(
\kappa \, \lambda(A), \tau \right)$ with shape $\kappa > 0$ and rate
$\tau > 0$.  Under this model, $\mu_X = \kappa \, c_1/\tau$ and
$\sigma_X^2 = \kappa \, c_2 / \tau^2$.  The Gamma measure is
independently scattered and we can thus apply the same simulation
method as for the Gaussian particles.

As previously, we assume that the kernel function $K$ is isotropic, in
that $K(v,u) = k(\ang(v,u))$ depends on the points $v, u \in \Sphere
^1$ through their angular or circular distance $\ang(v,u)\in
[0,\pi]$, only.  Table \ref{tab:planar} lists circular analogues of
von Mises--Fisher, uniform, and power kernels along with analytic
expressions for the integrals
\[
c_n = \int_{\Sphere^1} k(\ang(v,u))^n \, \dd v = 2\int_0^\pi k(\eta)^n \, \dd \eta, 
\]
where $n = 1, 2$, and the fractal index, $\alpha$, of the associated
correlation function, as defined in equation \eqref{eq:fractal.index}.
The power kernel model has previously been studied by \citet[Example
  3.3]{Wood1995}.  In analogy to the respective result on $\Sphere^2$,
if $\max_{u \in \Sphere^1} X(u) > c$, the boundary of the Gaussian
particle $Y_c$ has Hausdorff dimension $D = 2 -\frac{\alpha}{2}$
almost surely.

The general form of the associated correlation function is 
\begin{align*}
C(\theta) = \frac{1}{c_2} \Bigg( \int_{\pi-\theta}^\pi k(\phi) &k(2\pi-\phi-\theta) \,\dd\phi
+\int_0^{\pi-\theta} k(\phi) k(\theta+\phi) \,\dd\phi \\
&\quad +\int_0^\theta k(\phi) k(\theta-\phi) \,\dd\phi +\int_\theta^\pi k(\phi) k(\phi-\theta) \,\dd\phi
\Bigg) \! , \qquad 0 \leq \theta \leq \pi.  
\end{align*}
For the von Mises--Fisher kernel with parameter $a > 0$, the
correlation functions admits the closed form
\[
C(\theta) = \frac{I_0 \! \left( a \sqrt{2 (1+\cos \theta)} \right)}{I_0(2a)}, 
\qquad 0 \leq \theta \leq \pi,     
\]
where $I_0$ denotes the modified Bessel function of the first kind and of 
order $0$, and for the uniform kernel with cut-off parameter $r \in
(0,\frac{\pi}{2}]$, we have
\[
C(\theta) = 
\left( 1 - \frac{\theta}{2r} \right) \one(\theta \leq 2r), \qquad 0 \leq \theta \leq \pi.  
\]
For the power kernel with parameter $q \in (0, \tfrac{1}{2})$, tedious
but straightforward computations result in a complex closed form
expression, and a Taylor expansion about the origin yields the fractal
index, $\alpha = 1 - 2q$, stated in Table \ref{tab:planar}.

Thus, the von Mises--Fisher and uniform kernels result in Gaussian
particles with boundaries of Hausdorff dimension 1 and $\tfrac{3}{2}$,
respectively.  Under the power kernel, the Hausdorff dimension of the
Gaussian particle surface is $\tfrac{3}{2} + q$.  Simulated planar
Gaussian and gamma particles with von Mises--Fisher, uniform, and
power kernels are shown in Figure \ref{fig:2d}, with the parameter
values varying by row, as listed in Table \ref{tab:2d}.  The
simulation algorithm of Section \ref{sec:algorithm} continues to apply
with natural adaptions, such as defining the simulation grid $u_m =
2\pi m/M$ for $m = 1, \ldots, M$, where we use $M = 5,\!000$ and $N =
10^5$.


\section{Discussion}  \label{sec:discussion}

We have proposed a flexible framework for modelling and simulating
star-shaped Gaussian random particles.  The particles are represented
by their radial function, which is generated by an isotropic kernel
smoothing on the sphere.  From a theoretical perspective, the
construction is perfectly general, as every continuous isotropic
correlation function on a sphere admits an isotropic convolution root
\citep{Ziegel2014}.  The Hausdorff dimension of the particle surface
depends on the behaviour of the associated correlation function at the
origin, as quantified by the fractal index.  Under power kernels we
obtain Gaussian particles with boundaries of any Hausdorff dimension
between 2 and 3.
    
While a non-Gaussian theory remains elusive, we believe that similar
results hold for gamma particles where $L(A) \sim \mathrm{Gamma}
\left( \kappa \, \lambda(A), \tau \right)$ in \eqref{eq:model} with
shape $\kappa > 0$ and rate $\tau > 0$.  For instance, Figures
\ref{fig:Gaussian} and \ref{fig:gamma} show Gaussian and gamma
particles under the power kernel, respectively.  The surface structure
for the different bases resemble each other, even though the particles
exhibit more pronounced spikes under the gamma basis, see also the
planar particles in Figure~\ref{fig:2d}.  Similar particle models may
be generated using different types of \Levy bases $L$, such as Poisson
or inverse Gaussian \citep{Jonsdottir2008}.

\begin{figure}[t]  
\centering
\includegraphics[height=4cm]{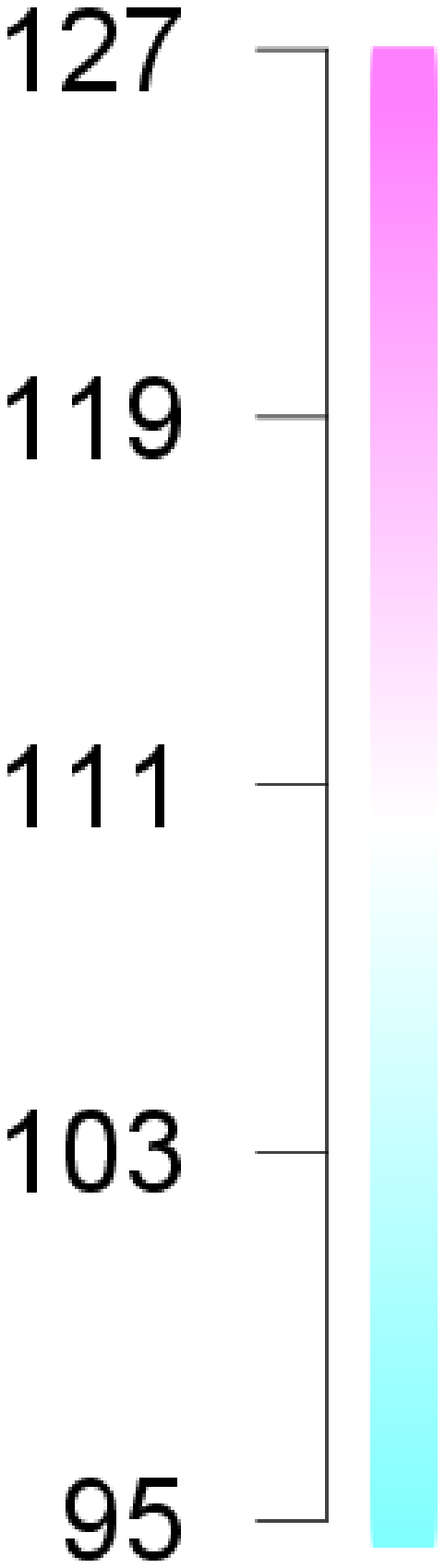}
\hspace{0.3cm}
\includegraphics[height=4cm]{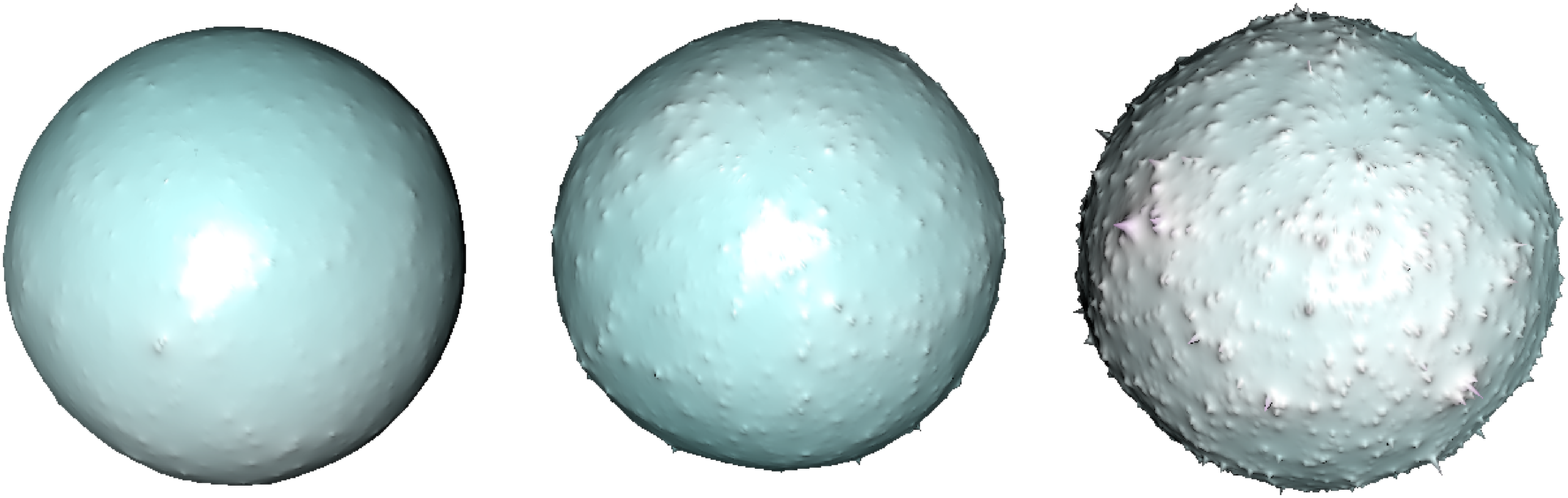}
\caption{Gamma particles with mean $\mu_X = 100$ and variance
  $\sigma^2_X = 10$, using a gamma measure in \eqref{eq:model} and the power kernel
  \eqref{eq:power} with $q = 0.05$ (left), $q = 0.25$ (middle) and $q
  = 0.5$ (right).  \label{fig:gamma}}
\bigskip
\end{figure}

We have focused on three-dimensional particles, except for brief
remarks on planar particles in the preceding section.  However, the
Gaussian particle approach generalises readily, to yield star-shaped
random particles in $\R^d$ for any $d \geq 2$.  The particles are
represented by their radial function and associated with an isotropic
random field on the sphere $\Sphere^{d-1}$.  In this setting,
\citet{EstradeIstas2010} derive a recursion formula that yields closed
form expressions for the isotropic correlation function on
$\Sphere^{d-1}$ that arises under a uniform kernel.  In analogy to
terminology used in the Euclidean case \citep{Gneiting1999b}, we refer
to this correlation function as the `spherical hat' function with
cut-off parameter $r \in (0,\frac{\pi}{2}]$.  Any spherical hat
function has a linear behaviour at the origin, and thus has fractal
index $\alpha = 1$.  \citet{EstradeIstas2010} also show that scale
mixtures of the spherical hat function provide correlation functions
of any desired fractal index $\alpha \in (0,1]$, similarly to the
corresponding results of \citet{HammersleyNelder1955} and
\citet{Gneiting1999b} in the Euclidean case.

A far-reaching, natural extension of our approach uses non-isotropic
kernels to allow for so-called multifractal particles, where the
roughness properties and the Hausdorff dimension may vary locally on
the particle surface.  This fits the framework of
\citet{Gagnonetal2006}, who argue that the topography of Earth is
multifractal, and allows for multifractal simulations of
three-dimensional celestial bodies, as opposed to extant work that
applies to the topography of `Flat Earth'.

We have not discussed parameter estimation under our modelling
approach, leaving this to future work.  In a Bayesian setting,
inference could be performed similarly to the methods developed by
\citet{WolpertIckstadt1998}, who use a construction akin to the random
field model in \eqref{eq:model} to represent the intensity measure of
a spatial point process, and propose a simulated inference framework,
where the model parameters, the underlying random field, and the point
process are updated in turn, conditional on the current state of the
other variables.  Alternatively, \cite{Ziegel2013} proposes a
non-parametric inference framework based on series of Gegenbauer
polynomials.


\acks

The authors thank Anders R{\o}nn-Nielsen, Eva B.~Vedel Jensen, Jens
Ledet Jensen, Richard Askey, Werner Ehm, two anonymous reviewers, and
the editor for comments and discussions.  This research has been
supported by the Centre for Stochastic Geometry and Advanced
Bioimaging at Aarhus University, which is funded by a grant from the
Villum Foundation; by the Alfried Krupp von Bohlen und Halbach
Foundation; by the German Research Foundation (DFG) within the
programme ``Spatio-/Temporal Graphical Models and Applications in
Image Analysis'', grant GRK 1653; by Statistics for Innovation,
$sfi^2$, in Oslo; by the U.S. National Science Foundation, grant
DMS-1309808; and by a Romberg Guest Professorship at the Heidelberg
Graduate School for Mathematical and Computational Methods in the
Sciences, funded by the German Universities Excellence Initiative
grant GSC 220/2.


\begin{singlespace}

\bibliography{minbib}

\begin{thebibliography}{}

\bibitem[\protect\citeauthoryear{Adler}{Adler}{2010}]{Adler1981}
Adler, R.~J. (2010).
\newblock {\em The Geometry of Random Fields\/} (SIAM Classics ed.).
\newblock Philadelphia: SIAM.

\bibitem[\protect\citeauthoryear{Cressie and Pavlicov{\'a}}{Cressie and
  Pavlicov{\'a}}{2002}]{CP02}
Cressie, N. and M.~Pavlicov{\'a} (2002).
\newblock Calibrated spatial moving average simulations.
\newblock {\em Statistical Modelling\/}~{\em 2}, 267--279.

\bibitem[\protect\citeauthoryear{{Digital Library of Mathematical
  Functions}}{{Digital Library of Mathematical Functions}}{2011}]{DLMF}
{Digital Library of Mathematical Functions} (2011).
\newblock Release 2011-07-01, \url{http://dlmf.nist.gov}.

\bibitem[\protect\citeauthoryear{do~Carmo}{do~Carmo}{1976}]{docarmo}
do~Carmo, M.~P. (1976).
\newblock {\em Differential Geometry of Curves and Surfaces}.
\newblock Englewood Cliffs: Prentice-Hall.

\bibitem[\protect\citeauthoryear{Estrade and Istas}{Estrade and
  Istas}{2010}]{EstradeIstas2010}
Estrade, A. and J.~Istas (2010).
\newblock Ball throwing on spheres.
\newblock {\em Bernoulli\/}~{\em 16}, 953--970.

\bibitem[\protect\citeauthoryear{Falconer}{Falconer}{1990}]{Falconer1990}
Falconer, K. (1990).
\newblock {\em Fractal Geometry: Mathematical Foundations and Applications}.
\newblock Chichester: John Wiley and Sons.

\bibitem[\protect\citeauthoryear{Fisher, Lewis, and Embleton}{Fisher
  et~al.}{1987}]{Fisher_etal_1987}
Fisher, N.~I., T.~Lewis, and B.~J.~J. Embleton (1987).
\newblock {\em Statistical Analysis of Spherical Data}.
\newblock Cambridge: Cambridge University Press.

\bibitem[\protect\citeauthoryear{Gagnon, Lovejoy, and Schertzer}{Gagnon
  et~al.}{2006}]{Gagnonetal2006}
Gagnon, J.-S., S.~L. Lovejoy, and D.~Schertzer (2006).
\newblock Multifractal earth topography.
\newblock {\em Nonlinear Processes in Geophysics\/}~{\em 13}, 541--570.

\bibitem[\protect\citeauthoryear{Gangolli}{Gangolli}{1967}]{Gangolli}
Gangolli, R. (1967).
\newblock Positive definite kernels on homogeneous spaces and certain
  stochastic processes related to {L\'e}vy's {B}rownian motion of several
  parameters.
\newblock {\em Annales de l'Institut Henri Poincar\'{e} section B\/}~{\em 3},
  121--226.

\bibitem[\protect\citeauthoryear{Gneiting}{Gneiting}{1999}]{Gneiting1999b}
Gneiting, T. (1999).
\newblock Radial positive definite functions generated by {E}uclid's hat.
\newblock {\em Journal of Multivariate Analysis\/}~{\em 69}, 88--119.

\bibitem[\protect\citeauthoryear{Grenander and Miller}{Grenander and
  Miller}{1994}]{GrenanderMiller1994}
Grenander, U. and M.~I. Miller (1994).
\newblock {Representations of knowledge in complex systems}.
\newblock {\em Journal of the Royal Statistical Society Series B\/}~{\em 56},
  549--603.

\bibitem[\protect\citeauthoryear{Hall and Roy}{Hall and
  Roy}{1994}]{HallRoy1994}
Hall, P. and R.~Roy (1994).
\newblock On the relationship between fractal dimension and fractal index for
  stationary stochastic processes.
\newblock {\em Annals of Applied Probability\/}~{\em 4}, 241--253.

\bibitem[\protect\citeauthoryear{Hammersley and Nelder}{Hammersley and
  Nelder}{1955}]{HammersleyNelder1955}
Hammersley, J.~M. and J.~A. Nelder (1955).
\newblock Sampling from an isotropic {G}aussian process.
\newblock {\em Proceedings of the Cambridge Philosophical Society\/}~{\em 51},
  652--662.

\bibitem[\protect\citeauthoryear{Hansen and Thorarinsdottir}{Hansen and
  Thorarinsdottir}{2013}]{HansenThorarinsdottir2013}
Hansen, L.~V. and T.~L. Thorarinsdottir (2013).
\newblock A note on moving average models for {G}aussian random fields.
\newblock {\em Statistics and Probability Letters\/}~{\em 83}, 850--855.

\bibitem[\protect\citeauthoryear{Hausdorff}{Hausdorff}{1919}]{Hausdorff1918}
Hausdorff, F. (1919).
\newblock Dimension und \"au{\ss}eres {M}a{\ss}.
\newblock {\em Mathematische Annalen\/}~{\em 79}, 157--179.

\bibitem[\protect\citeauthoryear{Hobolth}{Hobolth}{2003}]{Hobolth2003}
Hobolth, A. (2003).
\newblock The spherical deformation model.
\newblock {\em Biostatistics\/}~{\em 4}, 583--595.

\bibitem[\protect\citeauthoryear{Hobolth, Kent, and Dryden}{Hobolth
  et~al.}{2002}]{HobolthKentDryden2002}
Hobolth, A., J.~T. Kent, and I.~L. Dryden (2002).
\newblock {On the relation between edge and vertex modelling in shape
  analysis}.
\newblock {\em Scandinavian Journal of Statistics\/}~{\em 29}, 355--374.

\bibitem[\protect\citeauthoryear{Hobolth, Pedersen, and Jensen}{Hobolth
  et~al.}{2003}]{HobolthPedersenJensen2003}
Hobolth, A., J.~Pedersen, and E.~B.~V. Jensen (2003).
\newblock {A continuous parametric shape model}.
\newblock {\em Annals of the Institute of Statistical Mathematics\/}~{\em 55},
  227--242.

\bibitem[\protect\citeauthoryear{Jones}{Jones}{1963}]{Jones1963}
Jones, R.~H. (1963).
\newblock Stochastic processes on a sphere.
\newblock {\em Annals of Mathematical Statistics\/}~{\em 34}, 213--218.

\bibitem[\protect\citeauthoryear{Jones and Stofan}{Jones and
  Stofan}{2008}]{JonesStofan2008}
Jones, T. and E.~Stofan (2008).
\newblock {\em Planetology: Unlocking the Secrets of the Solar System}.
\newblock Washington, D.C.: National Geographic Society.

\bibitem[\protect\citeauthoryear{J{\'{o}}nsd{\'{o}}ttir, Schmiegel, and
  Jensen}{J{\'{o}}nsd{\'{o}}ttir et~al.}{2008}]{Jonsdottir2008}
J{\'{o}}nsd{\'{o}}ttir, K.~Y., J.~Schmiegel, and E.~B.~V. Jensen (2008).
\newblock L\'{e}vy based growth models.
\newblock {\em Bernoulli\/}~{\em 14}, 62--90.

\bibitem[\protect\citeauthoryear{Kent, Dryden, and Anderson}{Kent
  et~al.}{2000}]{KentDrydenAnderson2000}
Kent, J.~T., I.~L. Dryden, and C.~R. Anderson (2000).
\newblock {Using circulant symmetry to model featureless objects}.
\newblock {\em Biometrika\/}~{\em 87}, 527--544.

\bibitem[\protect\citeauthoryear{Kucinskas, Turcotte, Huang, and
  Ford}{Kucinskas et~al.}{1992}]{Kucinskas&1992}
Kucinskas, A.~B., D.~L. Turcotte, J.~Huang, and P.~G. Ford (1992).
\newblock {Fractal analysis of Venus topography in Tinatin Planitia and Ovda
  Regio}.
\newblock {\em Journal of Geophysical Research\/}~{\em 97}, 13635--13641.

\bibitem[\protect\citeauthoryear{Leopardi}{Leopardi}{2006}]{Leopardi2006}
Leopardi, P. (2006).
\newblock A partition of the unit sphere into regions of equal area and small
  diameter.
\newblock {\em Electronic Transactions on Numerical Analysis\/}~{\em 25},
  309--327.

\bibitem[\protect\citeauthoryear{Mandelbrot}{Mandelbrot}{1982}]{Mandelbrot1983}
Mandelbrot, B.~B. (1982).
\newblock {\em {The Fractal Geometry of Nature}}.
\newblock New York: W.~H.~Freeman and Company.

\bibitem[\protect\citeauthoryear{Miller, Joshi, Maffitt, Mcnally, and
  Grenander}{Miller et~al.}{1994}]{Miller1994}
Miller, M.~I., S.~Joshi, D.~R. Maffitt, J.~G. Mcnally, and U.~Grenander (1994).
\newblock Membranes, mitochondria and amoebae: {S}hape models.
\newblock {\em Journal of Applied Statistics\/}~{\em 21}, 141--163.

\bibitem[\protect\citeauthoryear{Muinonen, Nousiainen, Fast, Lumme, and
  Peltoniemi}{Muinonen et~al.}{1996}]{Muinonen&1996}
Muinonen, K., T.~Nousiainen, P.~Fast, K.~Lumme, and J.~I. Peltoniemi (1996).
\newblock {Light scattering by Gaussian random particles: Ray optics
  approximation}.
\newblock {\em Journal of Quantitative Spectroscopy and Radiative
  Transfer\/}~{\em 55}, 577--601.

\bibitem[\protect\citeauthoryear{Mu{\~n}oz, Volten, Hovenier, Nousiainen,
  Muinonen, Guirado, Moreno, and Waters}{Mu{\~n}oz et~al.}{2007}]{Munoz&2007}
Mu{\~n}oz, O., H.~Volten, J.~W. Hovenier, T.~Nousiainen, K.~Muinonen,
  D.~Guirado, F.~Moreno, and L.~B. F.~M. Waters (2007).
\newblock {Scattering matrix of large Saharan dust particles: Experiments and
  computations}.
\newblock {\em Journal of Geophysical Research\/}~{\em 112}, D13215.

\bibitem[\protect\citeauthoryear{Oliver}{Oliver}{1995}]{Oliver95}
Oliver, D.~S. (1995).
\newblock Moving averages for {G}aussian simulation in two and three
  dimensions.
\newblock {\em Mathematical Geology\/}~{\em 27}, 939--960.

\bibitem[\protect\citeauthoryear{Orford and Whalley}{Orford and
  Whalley}{1983}]{OrfordWhalley1983}
Orford, J.~D. and W.~B. Whalley (1983).
\newblock The use of the fractal dimension to quantify the morphology of
  irregular-shaped particles.
\newblock {\em Sedimentology\/}~{\em 30}, 655--668.

\bibitem[\protect\citeauthoryear{Price}{Price}{1988}]{Price1988}
Price, F. (1988).
\newblock {\em The {M}oon {O}bserver's {H}andbook}.
\newblock Cambridge: Cambridge University Press.

\bibitem[\protect\citeauthoryear{{R Development Core Team}}{{R Development Core
  Team}}{2009}]{R}
{R Development Core Team} (2009).
\newblock {\em R: A Language and Environment for Statistical Computing}.
\newblock Vienna, Austria: R Foundation for Statistical Computing.

\bibitem[\protect\citeauthoryear{Stoyan and Stoyan}{Stoyan and
  Stoyan}{1994}]{StoyanStoyan1992}
Stoyan, D. and H.~Stoyan (1994).
\newblock {\em {Fractals, Random Shapes and Point Fields}}.
\newblock Chichester: John Wiley \& Sons.

\bibitem[\protect\citeauthoryear{Tovchigrechko and Vakser}{Tovchigrechko and
  Vakser}{2001}]{Tovchigrechko}
Tovchigrechko, A. and I.~A. Vakser (2001).
\newblock How common is the funnel-like energy landscape in protein-protein
  interactions?
\newblock {\em Protein Science\/}~{\em 10}, 1572--1583.

\bibitem[\protect\citeauthoryear{Turcotte}{Turcotte}{1987}]{Turcotte1987}
Turcotte, D.~L. (1987).
\newblock A fractal interpretation of topography and geoid spectra on the
  {E}arth, {M}oon, {V}enus, and {M}ars.
\newblock {\em Journal of Geophysical Research\/}~{\em 92}, E597--E601.

\bibitem[\protect\citeauthoryear{Wicksell}{Wicksell}{1925}]{Wicksell:1925}
Wicksell, S.~D. (1925).
\newblock {The corpuscle problem. A mathematical study of a biometric problem}.
\newblock {\em Biometrika\/}~{\em 17}, 84--89.

\bibitem[\protect\citeauthoryear{Wolpert and Ickstadt}{Wolpert and
  Ickstadt}{1998}]{WolpertIckstadt1998}
Wolpert, R.~L. and K.~Ickstadt (1998).
\newblock Poisson/gamma random field models for spatial statistics.
\newblock {\em Biometrika\/}~{\em 85}, 251--267.

\bibitem[\protect\citeauthoryear{Wood}{Wood}{1995}]{Wood1995}
Wood, A. T.~A. (1995).
\newblock When is a truncated covariance function on the line a covariance
  function on the circle?
\newblock {\em Statistics \& Probability Letters\/}~{\em 24}, 157--164.

\bibitem[\protect\citeauthoryear{{Xue} and {Xiao}}{{Xue} and
  {Xiao}}{2011}]{XueXiao2011}
{Xue}, Y. and Y.~{Xiao} (2011).
\newblock Fractal and smoothness properties of space-time {G}aussian models.
\newblock {\em Frontiers of Mathematics in China\/}~{\em 6}, 1217--1248.

\bibitem[\protect\citeauthoryear{Ziegel}{Ziegel}{2013}]{Ziegel2013}
Ziegel, J. (2013).
\newblock Stereological modelling of random particles.
\newblock {\em Communications in Statistics. Theory and Methods\/}~{\em 42},
  1428--1442.

\bibitem[\protect\citeauthoryear{Ziegel}{Ziegel}{2014}]{Ziegel2014}
Ziegel, J. (2014).
\newblock Convolution roots and differentiability of isotropic positive
  definite functions on spheres.
\newblock {\em Proceedings of the American Mathematical Society\/}~{\em 142},
  2053--2077.

\end{thebibliography}

\end{singlespace}


\appendix

\section{Proof of Theorem \ref{thm:power}} 

We proceed in two parts, demonstrating first the asymptotic expansion
\eqref{eq:as} with the constant $b_q$ in \eqref{eq:bq}, and then
establishing the equality of the expressions in \eqref{eq:bq} and
\eqref{eq:explicit}, which confirms that $b_q$ is strictly positive.
The claim about the fractal index then is immediate from Theorem
\ref{thm:Hausdorff}.

In what follows, if $A(\cdot)$ and $B(\cdot)$ are nonnegative functions
on a common domain, we write
\[
A \lesssim B
\]
if there is a constant $C>0$ such that $A\leq CB$ and $C$ is
independent of any parameters or arguments appearing in $A$ and $B$
when the latter are allowed to vary in their specified domains.

\subsection{Asymptotic expansion \eqref{eq:as}} 

Recall that 
\[
C(\theta) = \frac{2}{c_2} \int_0^\pi \! \left( \pi^q \lambda^{-q} - 1 \right) \sin\lambda 
\int_{A(\lambda)} \! \left( \pi^q a(\theta,\lambda,\phi)^{-q} -1 \right) \dd\phi \, \dd\lambda,
\]
where 
\[
t(\theta,\lambda,\phi) = \sin\theta \sin\lambda \cos\phi + \cos\theta \cos\lambda, 
\qquad
a(\theta,\lambda,\phi) = \arccos t(\theta,\lambda,\phi), 
\]
and 
\[
A(\lambda) = \{ \phi \in [0,\pi] : 0 < a(\theta,\lambda,\phi) \leq \pi \}.  
\]
Therefore,
\[
\frac{c_2}{2} \big( C(0) - C(\theta) \big)
= \int_{0}^\pi \!\! \left( \pi^q \lambda^{-q} - 1 \right) \sin\lambda 
\left\{ \int_0^\pi \left( \pi^q \lambda^{-q} - 1 \right) \! \dd\phi 
- \int_{A(\lambda)} \!\! \left( \pi^{q} a(\theta,\lambda,\phi)^{-q} - 1 \right) 
\! \dd\phi \right\} \! \dd\lambda.
\]
Since $A(\lambda) = [0,\pi]$ for $\lambda \in (0,\pi-\theta]$ and
  $A(\lambda) \subset [0,\pi]$ for $\lambda \in (\pi-\theta,\pi)$, we
  decompose the integral on the right-hand side as $P_{1q}(\theta) +
  P_{2q}(\theta)$, where $P_{1q}(\theta)$ and $P_{2q}(\theta)$
  correspond to the integral with respect to $\lambda$ over
  $(0,\pi-\theta)$ and $(\pi-\theta,\pi)$, respectively. 
The first mean value theorem for integration
  implies that there exists a $t \in (\pi-\theta,\pi)$ such that
\[
P_{2q}(\theta) = \: \theta \left( \pi^q t^{-q} - 1 \right) \sin t \left\{ 
    \int_0^\pi \!\! \left( \pi^q t^{-q} - 1 \right) \! \dd\phi 
    - \int_{A(t)} \!\! \left( \pi^q a(\theta,t,\phi)^{-q} - 1 \right) \! \dd\phi \right\} \! . 
\]
Hence, $P_{2q}(\theta)$ decays at least as fast as
$\mathcal{O}(\theta^2)$ as $\theta \downarrow 0$.

As regards the first term, substituting $\lambda = \theta x$ yields
\[
P_{1q}(\theta)
= \theta^{2-2q} \, \pi^{2q} \int_0^{(\pi-\theta)/\theta} \frac{\sin(\theta x)}{\theta}
\left( x^{-q} - \pi^{-q} \theta^q \right)  
\int_0^\pi \!\! \left( x^{-q} - a(\theta,\theta x,\phi)^{-q} \theta^q \right) \! \dd\phi \, \dd x.
\]
In order to prove the asymptotic behaviour \eqref{eq:as} it now suffices to show 
that 
\begin{equation}  \label{eq:I}
\lim_{\theta \downarrow 0} I(\theta) 
= \frac{c_2}{2 \pi^{2q}} b_q 
= \int_0^{\infty} x^{1-q}f(0,x) \dd x, 
\end{equation}
where 
\[
I(\theta) = \int_0^{(\pi-\theta)/\theta} 
\frac {\sin (\theta x) }\theta (x^{-q}-\pi^{-q}\theta^q) f(\theta,x) \dd x
\]
for $\theta > 0$, with 
\[
f(\theta,x) = \int_0^\pi \left( x^{-q} - a(\theta,\theta x,\phi)^{-q} \theta^q \right) \dd\phi 
\]
for $x > 0$ and $\theta \geq 0$.  As we aim to find the limit
$\lim_{\theta\downarrow 0} I(\theta)$, we may assume that $\theta \in
(0,\theta_0)$ for some $0 < \theta_0 \ll 1$, and that $\lambda \in
[0,\pi-\theta]$.

\begin{lemma}  \label{le:1} 
We have $t(\theta,\lambda,\phi) \leq \cos(\theta-\lambda)$ and
\[
a(\theta, \lambda, \phi)^{-q} \leq \abs{\theta-\lambda}^{-q}.
\]
\end{lemma}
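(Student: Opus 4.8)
The plan is to verify both inequalities by elementary trigonometry together with the monotonicity of the arccosine. First I would write out the angle-difference identity $\cos(\theta-\lambda) = \cos\theta\cos\lambda + \sin\theta\sin\lambda$ and subtract the definition of $t(\theta,\lambda,\phi)$. The key observation is that this difference telescopes to a manifestly nonnegative expression,
\[
\cos(\theta-\lambda) - t(\theta,\lambda,\phi) = \sin\theta \, \sin\lambda \, (1 - \cos\phi).
\]
Since $\theta, \lambda \in [0,\pi]$ the factors $\sin\theta$ and $\sin\lambda$ are nonnegative, and $1 - \cos\phi \geq 0$ for $\phi \in [0,\pi]$; hence the right-hand side is nonnegative, and the first claim $t(\theta,\lambda,\phi) \leq \cos(\theta-\lambda)$ follows at once.

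For the second inequality I would invoke that $\arccos$ is decreasing on $[-1,1]$. Applying it to the first inequality gives $a(\theta,\lambda,\phi) = \arccos t(\theta,\lambda,\phi) \geq \arccos(\cos(\theta-\lambda))$. Because $\theta, \lambda \in [0,\pi]$ we have $\theta - \lambda \in [-\pi,\pi]$, on which range $\arccos(\cos x) = \abs{x}$; thus $a(\theta,\lambda,\phi) \geq \abs{\theta-\lambda}$. Finally, since $q > 0$ and both quantities are positive, raising to the power $-q$ reverses the inequality and yields $a(\theta,\lambda,\phi)^{-q} \leq \abs{\theta-\lambda}^{-q}$, as required.

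As every step is a direct computation or an appeal to a monotone function, I do not anticipate any genuine obstacle; the lemma is essentially a convexity/monotonicity bookkeeping device supplying the pointwise domination needed to pass to the limit in $I(\theta)$. The only point deserving a moment of care is ensuring $a(\theta,\lambda,\phi) > 0$, so that the negative power is well defined and the inequality is not vacuous: this is precisely the constraint $0 < a(\theta,\lambda,\phi)$ built into the domain $A(\lambda)$ over which the integrals are taken, and it excludes only the degenerate boundary case $t(\theta,\lambda,\phi) = 1$.
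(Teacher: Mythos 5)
Your proof is correct and follows essentially the same route as the paper's: the same identity $t(\theta,\lambda,\phi) = \cos(\theta-\lambda) + \sin\theta\sin\lambda(\cos\phi - 1)$, followed by monotonicity of $\arccos$ and of the map $s \mapsto s^{-q}$. You simply spell out the steps (including $\arccos(\cos x) = \abs{x}$ on $[-\pi,\pi]$ and the positivity caveat) that the paper leaves implicit.
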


\begin{proof}
We write
\[
t(\theta,\lambda,\phi) 
= \sin\theta \sin\lambda \cos\phi + \cos\theta \cos\lambda 
= \cos (\theta-\lambda) + \sin\theta \sin\lambda (\cos\phi-1).
\]
Since $\cos\phi-1\in[-2,0]$ and the inverse cosine function is
monotonically decreasing, the claims follow.
\end{proof}

\begin{lemma}  \label{le:2}
Define $f(0,x) = \lim_{\theta \downarrow 0} f(\theta,x)$. Then the limit exists and equals 
\[
f(0,x) = \int_0^\pi 
\left( x^{-q} - (x^2 - 2x\cos\phi +1)^{-q/2} \right) \dd \phi
\]
for $x \not \in \{ 0, 1 \}$.
\end{lemma}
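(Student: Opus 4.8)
The plan is to evaluate $\lim_{\theta\downarrow0} f(\theta,x)$ by dominated convergence, interchanging the limit with the integral over $\phi$. First I would fix $x \notin \{0,1\}$ and establish the pointwise limit of the integrand $x^{-q} - a(\theta,\theta x,\phi)^{-q}\theta^q$ for each $\phi \in [0,\pi]$; then I would exhibit a $\phi$-integrable dominating function, uniform in $\theta$, and invoke dominated convergence.

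For the pointwise limit, the key computation is the small-$\theta$ behaviour of $a(\theta,\theta x,\phi) = \arccos t(\theta,\theta x,\phi)$. Using the product-to-sum identities I would rewrite
\[
t(\theta,\theta x,\phi) = \tfrac12(1+\cos\phi)\cos(\theta(1-x)) + \tfrac12(1-\cos\phi)\cos(\theta(1+x)),
\]
so that $1 - t(\theta,\theta x,\phi) = \tfrac12(1+\cos\phi)\bigl(1-\cos(\theta(1-x))\bigr) + \tfrac12(1-\cos\phi)\bigl(1-\cos(\theta(1+x))\bigr)$. Expanding each factor $1-\cos(\cdot)$ to second order collapses the bracket to $x^2 - 2x\cos\phi + 1$, giving $1 - t(\theta,\theta x,\phi) = \tfrac{\theta^2}{2}(x^2 - 2x\cos\phi + 1)(1 + o(1))$ as $\theta\downarrow0$. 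Since $\arccos(1-s) \sim \sqrt{2s}$ as $s\downarrow0$, it follows that $a(\theta,\theta x,\phi) \sim \theta\,(x^2-2x\cos\phi+1)^{1/2}$, and hence $a(\theta,\theta x,\phi)^{-q}\theta^q \to (x^2-2x\cos\phi+1)^{-q/2}$. For $x\neq1$ the limit is finite for every $\phi$, because $x^2 - 2x\cos\phi + 1 \geq (1-x)^2 > 0$; this is exactly where the exclusion of $x=1$ enters, since at $x=1$, $\phi=0$ the quantity vanishes and the limit diverges.

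The domination is supplied directly by Lemma~\ref{le:1}: with $\lambda = \theta x$ it gives $a(\theta,\theta x,\phi)^{-q} \leq \abs{\theta - \theta x}^{-q} = \theta^{-q}\abs{1-x}^{-q}$, whence $0 \leq a(\theta,\theta x,\phi)^{-q}\theta^q \leq \abs{1-x}^{-q}$. Consequently the integrand is bounded in absolute value by $x^{-q} + \abs{1-x}^{-q}$, a constant in $\phi$ that is finite for every $x \notin \{0,1\}$ and therefore trivially integrable over $[0,\pi]$. With the pointwise limit and this uniform bound in hand, dominated convergence yields
\[
f(0,x) = \int_0^\pi \left( x^{-q} - (x^2 - 2x\cos\phi + 1)^{-q/2} \right) \dd\phi,
\]
as claimed.

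I expect the main obstacle to be making the pointwise asymptotic rigorous rather than merely formal: one must control the $o(1)$ error in $1 - t$ uniformly enough in $\phi$ to justify passing to the limit inside the $\arccos$, and verify that $a(\theta,\theta x,\phi) > 0$ for $\theta > 0$ and $x\neq1$ so that $a^{-q}$ is well defined throughout, which follows from the complementary lower bound $a(\theta,\theta x,\phi) \geq \theta\abs{1-x}$ implicit in the proof of Lemma~\ref{le:1}. Once these points are settled, the domination step is immediate and needs no further estimate.
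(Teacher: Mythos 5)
Your proof is correct and follows essentially the same route as the paper: both establish the pointwise limit $a(\theta,\theta x,\phi)/\theta \to (x^2-2x\cos\phi+1)^{1/2}$ (the paper via the factorisation $\arccos(t)/\theta = [\arccos(1-y^2)/y]\,[y/\theta]$ with $y=(1-t)^{1/2}$, you via product-to-sum identities and $\arccos(1-s)\sim\sqrt{2s}$, which amounts to the same computation of $(1-t)/\theta^2$), and both conclude by dominated convergence using the $\phi$-uniform bound $a(\theta,\theta x,\phi)^{-q}\theta^q \leq \abs{1-x}^{-q}$ from Lemma~\ref{le:1}. Note that since dominated convergence only requires pointwise convergence at each fixed $\phi$, your worry about uniformity of the $o(1)$ term in $\phi$ is not actually an obstacle.
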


\begin{proof}
For $x \not \in \{ 0, 1 \}$ fixed, the integrand in the definition of $f(\theta,x)$ 
is bounded in $\phi$.  The claim follows immediately from the limit
\[
 \lim_{\theta\downarrow 0} 
= \frac{a(\theta,\theta x,\phi)}{\theta} 
= (x^2 - 2x\cos\phi + 1)^{1/2}
\end{equation*}
along with Lebesgue's dominated convergence theorem.  Indeed, noting that 
\[
\frac{\arccos(t)}{\theta} = 
\left. \frac{\arccos \left( 1 - y^2 \right)}{y} \: \frac{y}{\theta} \: \right|_{y = (1-t)^{1/2}}
\]
for $t \in (0,1)$, we find that 
\[
\lim_{\theta \downarrow 0} \frac{a(\theta,\theta x,\phi)}{\theta} 
= \left. \frac{\dd}{\dd y} \arccos \left( 1 - y^2 \right) \right|_{y=0} \, 
\lim_{\theta \downarrow 0} \left( 
  \frac{1 - \cos\theta \cos\theta x}{\theta^2} - \frac{\sin\theta \sin\theta x}{\theta^2} \cos\phi \right)^{1/2}
= \left( x^2 - 2x \cos\phi + 1 \right)^{1/2}. 
\]
\end{proof}

\begin{lemma}  \label{le:3}
We have
\[
\abs{f(\theta,x)} \leq \pi  (x^{-q}+ \abs{x-1}^{-q}),
\]
for $x \in [0,(\pi-\theta)/\theta]$.  
\end{lemma}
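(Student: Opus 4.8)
The plan is to bound $\abs{f(\theta,x)}$ directly, by moving the absolute value inside the integral and then exploiting the nonnegativity of the two terms in the integrand together with the pointwise estimate supplied by Lemma \ref{le:1}. First I would apply the triangle inequality to obtain
\[
\abs{f(\theta,x)} \leq \int_0^\pi \abs{x^{-q} - a(\theta,\theta x,\phi)^{-q} \theta^q} \dd\phi.
\]
Since $q \in (0,1)$ while $x > 0$ and $a(\theta,\theta x,\phi) \in (0,\pi]$, both $x^{-q}$ and $a(\theta,\theta x,\phi)^{-q} \theta^q$ are nonnegative, so the elementary bound $\abs{\alpha - \beta} \leq \alpha + \beta$ for $\alpha, \beta \geq 0$ yields the integrand estimate $x^{-q} + a(\theta,\theta x,\phi)^{-q} \theta^q$.

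The next step is to control the second term uniformly in $\phi$ by invoking Lemma \ref{le:1}. Setting $\lambda = \theta x$, the hypothesis $x \in [0,(\pi-\theta)/\theta]$ guarantees $\lambda \in [0,\pi-\theta]$, so Lemma \ref{le:1} applies and gives $a(\theta,\theta x,\phi)^{-q} \leq \abs{\theta - \theta x}^{-q} = \theta^{-q} \abs{x-1}^{-q}$, whence $a(\theta,\theta x,\phi)^{-q} \theta^q \leq \abs{x-1}^{-q}$. The decisive feature is that this bound is free of $\phi$. Combining the two estimates bounds the integrand by the constant $x^{-q} + \abs{x-1}^{-q}$, and integrating over $\phi \in [0,\pi]$ produces the factor $\pi$ and hence the claimed inequality. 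At the endpoint $x = 0$ both sides are infinite, so the statement holds trivially there.

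I do not expect a genuine obstacle, since the estimate follows almost immediately from Lemma \ref{le:1}. The only point meriting care is observing that both terms in the integrand are nonnegative, which licenses the crude bound $\abs{\alpha - \beta} \leq \alpha + \beta$ in place of a more delicate estimate of the difference; this is precisely what lets the $\phi$-dependence collapse into the constant $\pi$. The role of the hypothesis $x \leq (\pi-\theta)/\theta$ is merely to keep $\lambda = \theta x$ inside the range $[0,\pi-\theta]$ on which Lemma \ref{le:1} was stated.
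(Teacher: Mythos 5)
Your proof is correct and follows the same route as the paper: the paper's own proof is exactly the one-line observation that Lemma \ref{le:1} with $\lambda = \theta x$ gives $a(\theta,\theta x,\phi)^{-q}\theta^q \leq \abs{x-1}^{-q}$, after which the triangle inequality and integration over $\phi \in [0,\pi]$ yield the claim. Your write-up simply makes explicit the steps the paper leaves implicit.
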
 

\begin{proof}
We find from Lemma \ref{le:1} that
\[
a(\theta, \theta x, \phi)^{-q}\theta^q \leq \abs{x-1}^{-q},
\]
and the claim follows.
\end{proof}

For later purposes we need to find the Taylor expansion of $a(\theta, \lambda, \phi)^{-q}$ around $\theta=0$,
\[
a(\theta, \lambda, \phi)^{-q} 
= a(0, \lambda, \phi)^{-q} 
+ \frac{\dd}{\dd\theta} a(\theta, \lambda, \phi)^{-q} \Big|_{\theta=0} \theta 
+ R(\theta,\lambda,\phi),
\end{equation*}
where $R$ denotes the error term.

\begin{lemma}  \label{le:4}
We have
\[
a(\theta,y,\phi)^{-q} 
= y^{-q} + \frac{q \cos\phi}{y^{q+1}}\theta + R(\theta,y,\phi),
\]
where the error term satisfies
\[
\abs {R(\theta,y,\phi)} \lesssim \theta^2
\left( \frac{1}{\abs{y-\theta}^{q+2}} + \frac{1}{\abs{y-\theta}^{q+1} \sin(y-\theta)} \right)
\]
for $y \in [2\theta, \pi-\theta]$.
\end{lemma}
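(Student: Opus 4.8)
The plan is to expand $a(\theta,y,\phi)^{-q}$ in two stages: first expand the inner angle $a(\theta,y,\phi)$ to first order in $\theta$, and then compose with the power map $x\mapsto x^{-q}$. Writing $\cos a(\theta,y,\phi)=t$ with $t=\cos\theta\cos y+\sin\theta\sin y\cos\phi$, at $\theta=0$ we have $t=\cos y$, hence $a(0,y,\phi)=y$; differentiating $\cos a=t$ gives $\partial_\theta a=-\partial_\theta t/\sin a$, which at $\theta=0$ equals $-\sin y\cos\phi/\sin y=-\cos\phi$. Thus the natural first-order model for the angle is $a_*:=y-\theta\cos\phi$, and I set $\rho:=a(\theta,y,\phi)-a_*$. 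A Taylor expansion of $x^{-q}$ about $x=y$ then gives, for some $\xi$ between $a$ and $y$,
\[
R(\theta,y,\phi)=a^{-q}-y^{-q}-qy^{-q-1}\theta\cos\phi=-qy^{-q-1}\rho+\tfrac{q(q+1)}2\,\xi^{-q-2}(a-y)^2,
\]
which cleanly separates the two terms of the claimed bound.

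The quadratic term is the easy one. Since $t=\cos y\cos\theta+(\sin y\cos\phi)\sin\theta$ has the form $A\cos\theta+B\sin\theta$, we have $t^2+(\partial_\theta t)^2=A^2+B^2=\cos^2y+\sin^2y\cos^2\phi\le1$, so $|\partial_\theta t|\le\sqrt{1-t^2}=\sin a$ and hence $|\partial_\theta a|\le1$ throughout. Consequently $a\in[y-\theta,y+\theta]$, so $|a-y|\le\theta$ and (with Lemma \ref{le:1}) $\xi\ge y-\theta$; the quadratic term is thus bounded by $\tfrac{q(q+1)}2|y-\theta|^{-q-2}\theta^2$, the first term of the assertion. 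Likewise $|qy^{-q-1}\rho|\le q|y-\theta|^{-q-1}|\rho|$, so the whole proof reduces to establishing
\[
|\rho|\lesssim\frac{\theta^2}{\sin(y-\theta)}.
\]

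To bound $\rho=a-a_*$ I compare cosines. A direct Maclaurin estimate, writing $\cos a-\cos a_*=\cos y\,[\cos\theta-\cos(\theta\cos\phi)]+\sin y\,[\sin\theta\cos\phi-\sin(\theta\cos\phi)]$ and using $\cos\theta-\cos(\theta\cos\phi)=-2\sin\tfrac{\theta(1+\cos\phi)}2\sin\tfrac{\theta(1-\cos\phi)}2$, shows that each contribution carries a factor $\sin^2\phi$, so $|\cos a-\cos a_*|\lesssim\theta^2\sin^2\phi$. Next, from $\cos a-\cos a_*=-2\sin\tfrac{a+a_*}2\sin\tfrac{a-a_*}2$ and $|a-a_*|\le2\theta$, which gives $|\sin\tfrac{a-a_*}2|\gtrsim|a-a_*|$, I obtain
\[
|\rho|\lesssim\frac{|\cos a-\cos a_*|}{\bigl|\sin\tfrac{a+a_*}2\bigr|}\lesssim\frac{\theta^2\sin^2\phi}{\bigl|\sin\tfrac{a+a_*}2\bigr|},
\]
so everything comes down to the lower bound $\bigl|\sin\tfrac{a+a_*}2\bigr|\gtrsim\sin(y-\theta)\sin^2\phi$.

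This last inequality is the crux and the main obstacle. For $y$ bounded away from $\pi$ it is routine, since then $\tfrac{a+a_*}2\in[y-\theta,y+\theta]$ keeps $\sin\tfrac{a+a_*}2$ comparable to $\sin(y-\theta)$ while $\sin^2\phi\le1$. The genuine difficulty is the near-antipodal regime $y\uparrow\pi-\theta$, $\phi\uparrow\pi$, where $a$ and $a_*$ both approach $\pi$ and $\sin\tfrac{a+a_*}2\downarrow0$, so a term-by-term comparison fails by an unbounded factor; the saving feature is that the $\sin^2\phi$ in the numerator forces the configuration to be nearly degenerate. Concretely, concavity of $\sin$ on $[0,\pi]$ gives $\bigl|\sin\tfrac{a+a_*}2\bigr|\ge\tfrac12(\sin a+\sin a_*)$, and the identity
\[
\sin^2a=\bigl(\cos\theta\sin y-\sin\theta\cos y\cos\phi\bigr)^2+\sin^2\theta\sin^2\phi
\]
yields the two lower bounds $\sin a\ge|\cos\theta\sin y-\sin\theta\cos y\cos\phi|$ and $\sin a\ge\sin\theta\sin\phi$, the first controlling the bulk and the second the antipodal corner. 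In that corner the denominator $\bigl|\sin\tfrac{a+a_*}2\bigr|$ is of order $\theta\sin\phi$ whereas the numerator $\sin(y-\theta)\sin^2\phi$ is of order $\theta\sin^2\phi$, so the extra factor $\sin\phi$ rescues the estimate. Reconciling these two scales through a short case split on the size of $\sin^2\phi$ (equivalently, on the proximity of $\phi$ to $\pi$) is the delicate step; once it is in place, the bound $|\rho|\lesssim\theta^2/\sin(y-\theta)$, and with it the full remainder estimate, follows.
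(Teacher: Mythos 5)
Your route is genuinely different from the paper's, and most of it checks out: the algebraic identity $R=-qy^{-q-1}\rho+\tfrac{q(q+1)}{2}\xi^{-q-2}(a-y)^2$ with $\rho=a-a_*$, the exact bound $|\partial_\theta a|\le 1$ via $t^2+(\partial_\theta t)^2=\cos^2y+\sin^2y\cos^2\phi\le1$ (hence $|a-y|\le\theta$ and $\xi\ge y-\theta$), and the estimate $|\cos a-\cos a_*|\lesssim\theta^2\sin^2\phi$ are all correct, and your handling of the quadratic term is in fact cleaner than the paper's. The gap is that the inequality you yourself call the crux, $\bigl|\sin\tfrac{a+a_*}{2}\bigr|\gtrsim\sin(y-\theta)\sin^2\phi$, is never proved: you assemble the right ingredients (concavity of $\sin$, the two lower bounds $\sin a\ge|\cos\theta\sin y-\sin\theta\cos y\cos\phi|$ and $\sin a\ge\sin\theta\sin\phi$) and check orders of magnitude at the single corner configuration $y=\pi-\theta$, $\phi\uparrow\pi$, but then defer the actual argument to an unspecified ``short case split on the size of $\sin^2\phi$''. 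Since your entire argument funnels through this one inequality, what you have written is a correct reduction, not a proof. Moreover, the deferred step is not mere bookkeeping: a split on $\sin^2\phi$ against a fixed threshold cannot work, because which of your two lower bounds for $\sin a$ dominates is governed by the competition between $\sin(\pi-y)$ and $\sin\theta$, not by $\phi$ alone; any workable threshold in $\phi$ must depend on $\theta$ and $y$.

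The gap is fillable, with a split on $y$ rather than on $\phi$. Write $\beta=\pi-y$. For $y\le\pi/2$, either $\sin$ is increasing on $[y-\theta,y+\theta]$ so $\sin a_*\ge\sin(y-\theta)$, or that interval lies within $2\theta_0$ of $\pi/2$ so $\sin a_*\ge\cos 2\theta_0\ge\tfrac12$; either way $\sin a_*\gtrsim\sin(y-\theta)\sin^2\phi$. For $y>\pi/2$, note $\sin(y-\theta)=\sin(\beta+\theta)\le2\beta\le\pi\sin\beta$, and then: (i) if $\sin\beta\ge4\sin\theta$, your first lower bound gives $\sin a\ge\cos\theta\sin\beta-\sin\theta\ge\tfrac12\sin\beta\gtrsim\sin(y-\theta)$; (ii) if $\sin\beta<4\sin\theta$, your second gives $\sin a\ge\sin\theta\sin\phi>\tfrac14\sin\beta\sin^2\phi\gtrsim\sin(y-\theta)\sin^2\phi$. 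Conclude with $\sin\tfrac{a+a_*}{2}\ge\tfrac12(\sin a+\sin a_*)$. With this inserted, your argument becomes a legitimate alternative to the paper's proof, which sidesteps the near-antipodal difficulty entirely: there $R$ is the Lagrange remainder of $\theta\mapsto a(\theta,y,\phi)^{-q}$, and the second derivative $\frac{q(q+1)}{a^{q+2}}(\partial_\theta a)^2-\frac{q}{a^{q+1}}\partial^2_\theta a$ is bounded term by term using only $1/\sqrt{1-t^2}\le1/\sin(y-\theta)$, $\sin\theta\le\sin(y-\theta)$ and $\sin y\le2\sin(y-\theta)$ on $y\in[2\theta,\pi-\theta]$, so no delicate geometric inequality ever arises.
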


\begin{proof}
From $t(0,y,\phi) = \cos y$, the zeroth-order term is immediately seen
to be $y^{-q}$.  For the first-order term, we compute
\[
\frac{\dd}{\dd\theta} a(\theta,y,\phi) 
= - \frac{1}{\sqrt{1-t^2}}(\cos\theta \sin y \cos\phi - \sin\theta \cos y),
\]
so
\[
\frac{\dd}{\dd\theta} a(\theta,y,\phi) \Big|_{\theta=0} = -\cos\phi.
\]
Hence,
\[
\frac{\dd}{\dd\theta} a(\theta,y,\phi)^{-q} \Big|_{\theta=0} 
= - \frac{q}{a(0,y,\phi)^{q+1}} \frac{\dd}{\dd\theta} a(\theta,y,\phi) \Big|_{\theta=0} \\
= \frac{q\cos\phi}{y^{q+1}}.
\]

To estimate $R$, we present it in Lagrange form,
\[
R(\theta,y,\phi) 
= \frac{1}{2} \frac{\dd^2}{\dd\theta^2} a(\theta,y,\phi)^{-q}
  \Big|_{\theta=\hat\theta} \theta^2,
\]
for some $\hat\theta \in [0,\theta]$.  Then, we have
\begin{eqnarray*}
\frac{\dd^2}{\dd\theta^2} a(\theta,y,\phi)^{-q} 
& = & \frac{\dd}{\dd\theta} \left( - \frac{q}{a(\theta,y,\phi)^{q+1}} 
      \frac{\dd}{\dd\theta} a(\theta,y,\phi) \right) \\
& = & \frac{q(q+1)}{a(\theta,y,\phi)^{q+2}} \left( 
      \frac{\dd}{\dd\theta} a(\theta,y,\phi)\right)^2 - 
      \frac{q} {a(\theta,y,\phi)^{q+1}}
      \frac{\dd^2}{\dd\theta^2} a(\theta,y,\phi) \\
& = & A(\theta,y,\phi) + B(\theta,y,\phi). \rule{0mm}{8mm}
\end{eqnarray*}
Let us now estimate $A(\theta,y,\phi)$ from above for $y \in [2\theta,\pi-\theta]$.  From
\[
\frac{1}{\sqrt{1-t^2}} \leq \frac{1}{\sqrt{1-\cos^2(y-\theta)}} = \frac 1 {\sin (y-\theta)}
\]
we get
\[
\abs{\frac{\dd}{\dd\theta} a(\theta,y,\phi)} \leq \frac{\sin y + \sin\theta}{\sin(y-\theta)}.
\]
By monotonicity, we have $\sin\theta \leq \sin(y-\theta)$ and $\sin y \leq 2 \sin(y-\theta)$. 
Hence, in view of Lemma \ref{le:1}, we find that
\[
\abs{A(\theta,y,\phi)} \lesssim \abs{y-\theta}^{-q-2}
\]
for $y \in [2\theta,\pi-\theta]$.

To estimate $B(\theta,y,\phi)$ in the same range, we compute
\[
\frac{\dd^2}{\dd\theta^2} a(\theta,y,\phi) = C(\theta,y,\phi) + D(\theta,y,\phi),
\]
where
\[
C(\theta,y,\phi) 
= \frac{\dd}{\dd\theta} 
    \left( - \frac{1}{\sqrt{1-t^2}} \right) (\cos\theta \sin y \cos\phi - \sin\theta \cos y) 
= - \frac{t}{(1-t^2)^{3/2}} (\cos\theta \sin y \cos\phi - \sin\theta \cos y)^2
\]
and
\[
D(\theta,y,\phi) 
= - \frac{1}{\sqrt{1-t^2}} \frac{\dd}{\dd\theta} (\cos\theta \sin y \cos\phi - \sin\theta \cos y)
= \frac{1}{\sqrt{1-t^2}} (\sin\theta \sin y\cos\phi + \cos \theta \cos y).
\]
We have
\[
\frac{\abs t}{(1-t^2)^{3/2}} 
\leq \frac{\abs{\cos(y-\theta)}}{\sin^3(y-\theta)} 
\leq \frac{1}{\sin^3(y-\theta)}
\]
for $y \in [2\theta,\pi-\theta]$.  Hence,
\[
\abs{C(\theta,y,\phi)} 
\leq \frac{(\sin y + \sin\theta)^2}{\sin^3(y-\theta)} 
\lesssim \frac{1}{\sin(y-\theta)}.
\]
Similarly, $\abs{D(\theta,y,\phi)} \lesssim 1/(\sin(y-\theta))$.  Hence,
\[
\abs{B(\theta,y,\phi)} \lesssim \frac{1}{\abs{y-\theta}^{q+1}\sin(y-\theta)}
\]
for $y \in [2\theta,\pi-\theta]$.

Finally, for $\hat\theta \in [0,\theta]$, 
\[
\abs{A(\hat\theta,y,\phi)} 
\lesssim \frac{1}{(y-\hat \theta)^{q+2}} \leq \frac{1}{\abs{y-\theta}^{q+2}},
\]
and similarly,
\[
\abs{B(\hat\theta,y,\phi)} 
\lesssim \frac{1}{(y-\hat\theta)^{q+1} \sin(y-\hat\theta)} 
\lesssim \frac{1}{\abs{y-\theta}^{q+1} \sin(y-\theta)}.
\]
Combining the estimates for $A$ and $B$, the proof of the lemma is complete.
\end{proof}

In what follows, we need the classical estimate
\begin{equation}  \label{eq:sin}
0 \leq \frac{\sin \theta x}{\theta} \leq x,
\end{equation}
for $x \geq 0$ and $\theta > 0$.

\begin{lemma}  \label{le:5}
We have
\[
\frac{\sin \theta x}{\theta} \abs{f(\theta,x)} \lesssim (x-1)^{-1-q}
\]
for $x \in [2,(\pi-\theta)/\theta]$.
\end{lemma}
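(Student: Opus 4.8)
The plan is to exploit the refined Taylor expansion of Lemma \ref{le:4} rather than the crude bound of Lemma \ref{le:3}; the latter alone only yields $\frac{\sin\theta x}{\theta}\abs{f(\theta,x)}\lesssim x\,(x-1)^{-q}\lesssim (x-1)^{1-q}$, which grows in $x$ and is hopelessly weak. First I would set $y=\theta x$ in Lemma \ref{le:4}. The range $x\in[2,(\pi-\theta)/\theta]$ corresponds exactly to $y=\theta x\in[2\theta,\pi-\theta]$, so the expansion and its error bound apply throughout. Multiplying the identity $a(\theta,\theta x,\phi)^{-q}=(\theta x)^{-q}+q\cos\phi\,(\theta x)^{-q-1}\theta+R(\theta,\theta x,\phi)$ by $\theta^q$ gives $a(\theta,\theta x,\phi)^{-q}\theta^q=x^{-q}+q\cos\phi\,x^{-q-1}+\theta^q R(\theta,\theta x,\phi)$, so the constant term cancels the $x^{-q}$ in the integrand of $f$. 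The crucial point is that the first-order term is proportional to $\cos\phi$, and $\int_0^\pi\cos\phi\,\dd\phi=0$; hence
\[
f(\theta,x)=-\theta^q\int_0^\pi R(\theta,\theta x,\phi)\,\dd\phi.
\]

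Next I would insert the error estimate of Lemma \ref{le:4}, using $\abs{\theta x-\theta}=\theta(x-1)$ and $\sin(\theta x-\theta)=\sin(\theta(x-1))$. The $\theta$-powers combine as $\theta^q\cdot\theta^2\cdot\theta^{-(q+2)}=1$ on the first piece and $\theta^q\cdot\theta^2\cdot\theta^{-(q+1)}=\theta$ on the second, leaving
\[
\abs{f(\theta,x)}\lesssim \frac{1}{(x-1)^{q+2}}+\frac{\theta}{(x-1)^{q+1}\,\sin(\theta(x-1))}.
\]
It then remains to multiply by $\frac{\sin\theta x}{\theta}$ and bound each term by $(x-1)^{-1-q}$. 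For the first term I would invoke \eqref{eq:sin}, namely $\frac{\sin\theta x}{\theta}\leq x$, together with $x\leq 2(x-1)$ for $x\geq 2$, which gives $\frac{\sin\theta x}{\theta}\,(x-1)^{-q-2}\lesssim (x-1)^{-1-q}$ at once.

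The only genuinely delicate point is the second term, where the factor $\theta$ in the numerator must cancel the $\tfrac{1}{\theta}$ from $\frac{\sin\theta x}{\theta}$, leaving $\frac{\sin\theta x}{(x-1)^{q+1}\sin(\theta(x-1))}$; the claim then reduces to $\sin\theta x\lesssim\sin(\theta(x-1))$. I would establish this from the addition formula $\sin\theta x=\cos\theta\,\sin(\theta(x-1))+\sin\theta\,\cos(\theta(x-1))$, which reduces the bound to $\sin\theta\,\abs{\cos(\theta(x-1))}\lesssim\sin(\theta(x-1))$, i.e.\ to $\sin(\theta(x-1))\geq\sin\theta$. Since $\theta(x-1)$ ranges over $[\theta,\pi-2\theta]$ and $\sin$ is concave on $(0,\pi)$, its minimum over this subinterval is attained at an endpoint, and $\sin(\pi-2\theta)=\sin 2\theta\geq\sin\theta$ for $\theta$ small; hence $\sin(\theta(x-1))\geq\sin\theta$ and $\sin\theta x\leq 2\sin(\theta(x-1))$, so the second term is $\lesssim (x-1)^{-1-q}$ as well. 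I expect this comparison of the two sine factors to be the main obstacle, as it is precisely where the apparent $1/\theta$ blow-up is tamed; the rest is bookkeeping of $\theta$-powers and the elementary estimate \eqref{eq:sin}.
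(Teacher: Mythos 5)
Your proof is correct and follows essentially the same route as the paper: apply Lemma \ref{le:4} at $y=\theta x$, use $\int_0^\pi \cos\phi \dd\phi = 0$ to reduce $f(\theta,x)$ to the remainder term, and bound the two resulting pieces via \eqref{eq:sin} together with the comparison $\sin \theta x \lesssim \sin(\theta(x-1))$. The paper leaves that last sine comparison implicit (it is the same estimate $\sin y \leq 2 \sin(y-\theta)$ already invoked "by monotonicity" in the proof of Lemma \ref{le:4}), whereas you justify it explicitly via the addition formula and concavity of sine on $[\theta,\pi-2\theta]$ --- a welcome detail, but not a different approach.
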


\begin{proof}
From Lemma \ref{le:4} and $\int_0^\pi \cos\phi \dd\phi = 0$, we find 
\[
f(\theta,x)= \theta^{q+2}\int_0^\pi R(\hat \theta, \theta x, \phi) \dd\phi.
\]
Using Lemma \ref{le:4} and \eqref{eq:sin}, we get
\[
\frac{\sin \theta x}{\theta} \abs{f(\theta,x)} 
\lesssim \frac{\sin \theta x}{\theta} (x-1)^{-2-q} + \frac{\sin \theta x}{\sin \theta (x-1)} (x-1)^{-1-q}
\lesssim (x-1)^{-1-q}. 
\]
\end{proof}

\begin{lemma}
We have that $x^{1-q}f(0,x)$ is Lebesgue integrable and
\[
\lim_{\theta \downarrow 0} I(\theta) = \int_0^{\infty} x^{1-q}f(0,x) \dd x.
\]
\end{lemma}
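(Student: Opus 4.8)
The plan is to justify interchanging the limit $\theta \downarrow 0$ with the integral defining $I(\theta)$ by means of Lebesgue's dominated convergence theorem, after first recasting $I(\theta)$ as an integral over the fixed domain $(0,\infty)$. To this end I would write $I(\theta) = \int_0^\infty g_\theta(x) \, \dd x$, where
\[
g_\theta(x) = \one\!\left( 0 < x \leq \tfrac{\pi-\theta}{\theta} \right) \frac{\sin(\theta x)}{\theta} \left( x^{-q} - \pi^{-q}\theta^q \right) f(\theta,x).
\]
For each fixed $x \notin \{0,1\}$ the indicator equals one once $\theta$ is small enough, $\frac{\sin(\theta x)}{\theta} \to x$, the factor $x^{-q} - \pi^{-q}\theta^q \to x^{-q}$, and $f(\theta,x) \to f(0,x)$ by Lemma \ref{le:2}; hence $g_\theta(x) \to x^{1-q} f(0,x)$ pointwise almost everywhere. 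The work therefore reduces to exhibiting a single $\theta$-independent, Lebesgue integrable majorant for $|g_\theta|$ valid for all sufficiently small $\theta$.

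I would build the majorant piecewise, splitting $(0,\infty)$ at $x=2$. On the bounded part $(0,2]$ the estimate \eqref{eq:sin} and Lemma \ref{le:3}, together with the elementary bound $0 \le x^{-q} - \pi^{-q}\theta^q \le x^{-q}$ (which holds on the integration range since $\theta x \le \pi$ forces $\pi^{-q}\theta^q \le x^{-q}$), give
\[
|g_\theta(x)| \le \pi \, x^{1-q} \left( x^{-q} + |x-1|^{-q} \right) = \pi \left( x^{1-2q} + x^{1-q} |x-1|^{-q} \right).
\]
This function is integrable on $(0,2]$ precisely because $q<1$: the term $x^{1-2q}$ is integrable at the origin, while the mild singularity $|x-1|^{-q}$ at $x=1$ is integrable as well. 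On the tail $[2,\infty)$ the naive bound from Lemma \ref{le:3} would only yield something of order $x^{1-2q}$, which is \emph{not} integrable at infinity when $q<1$; here I would instead invoke Lemma \ref{le:5}, which exploits the cancellation $\int_0^\pi \cos\phi \, \dd\phi = 0$ of the first-order Taylor term to produce the sharper estimate $\frac{\sin(\theta x)}{\theta} |f(\theta,x)| \lesssim (x-1)^{-1-q}$. Since $|x^{-q} - \pi^{-q}\theta^q| \lesssim 1$ on $[2,\infty)$, this gives $|g_\theta(x)| \lesssim (x-1)^{-1-q}$, which is integrable at infinity because $q>0$.

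Combining the two regimes yields an integrable majorant on all of $(0,\infty)$, so dominated convergence applies and delivers simultaneously the Lebesgue integrability of the limit $x^{1-q} f(0,x)$ and the identity $\lim_{\theta\downarrow 0} I(\theta) = \int_0^\infty x^{1-q} f(0,x) \, \dd x$. I expect the main obstacle to be the tail: the crude bound of Lemma \ref{le:3} fails to be integrable at infinity, so securing a uniform integrable majorant there hinges entirely on the refined cancellation captured by Lemma \ref{le:5}, which is exactly why that lemma was established beforehand.
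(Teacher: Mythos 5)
Your proposal is correct and follows essentially the same route as the paper: recast $I(\theta)$ as an integral over $(0,\infty)$, obtain pointwise convergence from Lemma \ref{le:2}, and build a $\theta$-uniform integrable majorant by splitting at $x=2$, using \eqref{eq:sin} and Lemma \ref{le:3} on $(0,2]$ and the cancellation-based bound of Lemma \ref{le:5} on the tail, then conclude by dominated convergence. Your observation that the crude bound of Lemma \ref{le:3} fails at infinity, making Lemma \ref{le:5} indispensable there, is exactly the point of the paper's argument.
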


\begin{proof}
From \eqref{eq:sin}, Lemma \ref{le:3}, and Lemma \ref{le:5}, we find that
\[
\abs{\frac {\sin (\theta x) }\theta (x^{-q}-\pi^{-q}\theta^q) f(\theta,x)} 
\lesssim
\begin{cases}
x^{1-2q} + \abs{x-1}^{-q},    & \quad x \in [0,2] \\
(x-1)^{-1-2q} + (x-1)^{-1-q}, & \quad x \in [2,\infty)
\end{cases}
\]
uniformly in $\theta \in (0,\theta_0]$, where $0 < \theta_0 \ll1$.
Since the latter function is Lebesgue integrable, the claims follow
from Lebesgue's dominated convergence theorem along with Lemma \ref{le:2}.  
\end{proof}

This completes the proof of \eqref{eq:I} and therefore of the asymptotic relationship 
\eqref{eq:as} with the constant $b_q$ in \eqref{eq:bq}.

\subsection{Equality of the expressions in \eqref{eq:bq} and \eqref{eq:explicit}}

We now show that the constant $b_q$ is strictly positive.  Specifically, we demonstrate the 
equality of the expressions in \eqref{eq:bq} and \eqref{eq:explicit} for $q \in (0,1)$.  
Toward this end, we first prove that
\begin{equation}  \label{eq:bq-representation}
b_q = \frac{\pi^{2q+1} \Gamma(1 - \tfrac{1}{2} q)}{c_2 \Gamma(\tfrac{1}{2} q)} 
\int_0^\infty t^{q-1} \left( 1 - \ee^{-t} \, {}_1F_1(1 - \tfrac1{2}{q}; 1; t) \right) \frac{\dd t}{t},
\end{equation}
where with $(x)_0 = 1$ and $(x)_n = x (x+1)
\cdots (x+n-1)$ for $n = 1, 2, \ldots$, the classical confluent
hypergeometric function \citep[Chapter 13]{DLMF} can be written as
\[
{}_1F_1(a;b;t) = \sum_{k=0}^\infty \frac{(a)_k}{(b)_k} \frac{t^k}{k!}.
\]

We establish this representation as follows.  With a keen eye on the
inner integral in \eqref{eq:bq}, we note that for $x > 0$ and $\phi
\in (0,\pi)$,
\[
x^{-q} = (x^2)^{-q/2} 
= \frac{1}{\Gamma(\tfrac{1}{2} q)} \int_0^\infty \ee^{-tx^2} \, t^{q/2} \, \frac{\dd t}{t},
\]
and 
\[
(1 + x^2 - 2x\cos\phi)^{-q/2} 
= \frac{1}{\Gamma(\tfrac{1}{2} q)} \int_0^\infty \ee^{-t(1+x^2-2x\cos\phi)} \, t^{q/2} \, \frac{\dd t}{t}.
\]
Substituting these formulae into \eqref{eq:bq}, and interchanging the
order of the integration with respect to $\phi$ and $t$, we obtain
\begin{equation}  \label{eq:bq-intermediate1}
b_q = \frac{2\pi^{2q}}{c_2 \Gamma(\tfrac{1}{2} q)} \int_0^\infty x^{1-q} \int_0^\infty t^{q/2} 
\ee^{-tx^2} \int_0^\pi \left( 1 -  \ee^{-t(1-2x\cos\phi)} \right) \dd\phi \, \frac{\dd t}{t} \, \dd x. 
\end{equation}
By well-known formulae, 
\[
\int_0^\pi \ee^{t\cos\phi} \dd\phi = \pi \, I_0(t) = \pi \, {}_0F_1(1; \tfrac{1}{4} t^2), 
\]
where $I_0$ denotes the modified Bessel function of the first kind of
order $0$, and ${}_0F_1$ is a special case of the generalised
hypergeometric series \citep[formulae 10.32.1 and 10.39.9]{DLMF}.
Therefore,
\[
\int_0^\pi \left( 1 - \ee^{-t(1-2x\cos\phi)} \right) \dd\phi 
= \pi \left( 1 - \ee^{-t} \, {}_0F_1(1;t^2x^2) \right) \! .
\]
Substituting this result into \eqref{eq:bq-intermediate1}, and
interchanging the order of the integration, we obtain
\begin{equation}  \label{eq:bq-intermediate2}
b_q = \frac{2 \pi^{2q+1}}{c_2 \Gamma(\tfrac{1}{2} q)} 
      \int_0^\infty t^{q/2} \int_0^\infty x^{1-q} \ee^{-tx^2} 
      \left( 1 - \ee^{-t} \, {}_0F_1(1;t^2x^2) \right) \dd x \: \frac{\dd t}{t}. 
\end{equation}
With the substitution $x = u^{1/2}$, we get 
\[
\int_0^\infty x^{1-q} \ee^{-tx^2} \dd x 
= \frac{1}{2} \, t^{\tfrac1{2}{q} - 1} \Gamma(1 - \tfrac{1}{2} q).
\]
We apply next a well-known formula for the Laplace transforms of
generalised hypergeometric series \citep[formula 16.5.3]{DLMF} to
obtain, with the substitution $x = t^{-1/2} \, u^{1/2}$,
\[
\int_0^\infty \!\!\! x^{1-q} \, \ee^{-tx^2} {}_0F_1(1; t^2x^2) \dd x 
= \frac{1}{2} \, t^{\tfrac{1}{2} q - 1} \Gamma(1 - \tfrac{1}{2} q) \, {}_1F_1(1 - \tfrac{1}{2} q; 1; t).
\]
Consequently, by \eqref{eq:bq-intermediate2}, we have established the
representation \eqref{eq:bq-representation}.

Finally, we apply the Kummer formula for the ${}_1F_1$ function
\citep[formula 13.2.39]{DLMF} to show that
\[
1 - \ee^{-t} {}_1F_1(1 - \tfrac{1}{2} q; 1; t) 
= \frac{1}{2} qt \, {}_2F_2(\tfrac{1}{2} q + 1, 1; 2, 2; -t).
\]
Thus, by the representation \eqref{eq:bq-representation},
\[
b_q = \frac{\pi^{2q+1} q}{2 c_2} \frac{\Gamma(1 - \tfrac{1}{2} q)}{\Gamma(\tfrac{1}{2} q)} 
\int_0^\infty \!\! t^q \, {}_2F_2(\tfrac{1}{2} q+1, 1; 2, 2; -t) \, \frac{\dd t}{t},
\]
and this integral is a well-known Mellin transform; see \citet[formula
  16.5.1]{DLMF}, where the integral is given in inverse Mellin
transform format \citep[Section 1.14(iv)]{DLMF}.  The proof of the
equality of the expressions in \eqref{eq:bq} and \eqref{eq:explicit}
is now complete. 


\end{document}